\numberwithin{equation}{section}
\newcommand{\C}{\mathscr{C}}
\newcommand{\E}{\mathcal{E}}
\renewcommand{\L}{\mathscr{L}}
\newcommand{\N}{\mathbb{N}}
\newcommand{\R}{\mathbb{R}}
\newcommand{\Z}{\mathbb{Z}}
\newcommand{\loc}{{\rm loc}}
\newcommand{\dive}{{\mbox{\normalfont div}}}
\newcommand{\dist}{{\mbox{\normalfont dist}}}
\newcommand{\bequ}{\begin{equation}}
\newcommand{\nequ}{\end{equation}}
\newcommand{\mini}[1]{\mbox{\tiny $#1$}}
\newcommand{\Deltaz}{\tensor*[]{\Delta}{_z}}
\newcommand{\Deltamz}{\tensor*[]{\Delta}{_{-z}}}
\newcommand{\PV}{\mbox{\normalfont P.V.}}
\newcommand{\Haus}{\mathcal{H}}
\DeclareMathOperator{\supp}{supp}
\theoremstyle{plain}
\newtheorem{definition}{Definition}[section]
\newtheorem{theorem}[definition]{Theorem}
\newtheorem{proposition}[definition]{Proposition}
\newtheorem{lemma}[definition]{Lemma}
\theoremstyle{definition}
\newtheorem{remark}[definition]{Remark}
\renewcommand{\le}{\leqslant}
\renewcommand{\ge}{\geqslant}
\title[Interior regularity in Sobolev and Nikol'skii spaces]{Interior regularity of solutions of non-local
equations in Sobolev and Nikol'skii spaces}
\author{Matteo Cozzi}
\address{Dipartimento di Matematica ``Federigo Enriques'', Universit\`a degli Studi di Milano, Via Saldini 50, I-20133 Milano (Italy) and Laboratoire Ami\'enois de Math\'ematique Fondamentale et Appliqu\'ee, UMR CNRS 7352, Universit\'e de Picardie ``Jules Verne'', 33 Rue Saint-Leu, F-80039 Amiens (France)}
\email{matteo.cozzi@unimi.it}
\subjclass[2010]{35R09, 35R11, 45K05, 35B65, 46E35}
\keywords{Integro-differential equations, fractional Laplacian, regularity theory, Nirenberg's translation method, Caccioppoli inequality, fractional Sobolev spaces, Nikol'skii spaces}
\thanks{It is a pleasure to thank Enrico Valdinoci for his dedication, advice and encouragement. I would also like to express my gratitude to the Weierstra{\ss} Institut f\"ur Angewandte Analysis und Stochastik (WIAS) of Berlin, where part of this work was carried out.}
\begin{document}

\maketitle

\bigskip
\bigskip

\begin{abstract}
We prove interior~$H^{2 s - \varepsilon}$ regularity for weak solutions of linear elliptic integro-differential equations close to the fractional~$s$-Laplacian. The result is obtained via intermediate estimates in Nikol'skii spaces, which are in turn carried out by means of an appropriate modification of the classical translation method by Nirenberg.
\end{abstract}

\bigskip

\section{Introduction}

One of the first fundamental achievements in the field of the regularity theory for weak solutions of second order linear elliptic differential equations is the existence of weak second derivatives. Indeed, let~$\Omega$ be an open set of~$\R^n$ and~$u \in H^1(\Omega)$ a weak solution of
\begin{equation} \label{2ordeq}
- \dive \left( A(\cdot) \nabla u \right) = f \quad \mbox{in } \Omega,
\end{equation}
where the~$n \times n$ matrix~$A = [a_{i j}]$ is uniformly elliptic, with entries~$a_{i j} \in C^{0, 1}_\loc(\Omega)$, and the right-hand term~$f \in L^2(\Omega)$. Then, one gets that~$u \in H^2_\loc(\Omega)$ and, for any domain~$\Omega' \subset \subset \Omega$,
$$
\| u \|_{H^2(\Omega')} \le C \left( \| u \|_{L^2(\Omega)} + \| f \|_{L^2(\Omega)} \right),
$$
for some constant~$C > 0$ independent of~$u$ and~$f$.

Such result is typically ascribed to Louis Nirenberg, who in~\cite{N55} obtained higher order Sobolev regularity for general linear elliptic equations. To do so, he introduced the by now classical~\emph{translation method}. In the setting of equation~\eqref{2ordeq} the idea is basically to consider the difference quotients
$$
D_i^h u(x) := \frac{u(x + h e_i) - u(x)}{h},
$$
for~$i = 1, \ldots, n$ and~$h \ne 0$ suitably small in modulus, and use the equation itself to recover a uniform bound in~$h$ for the gradient of~$D_i^h u$ in~$L^2(\Omega')$. A compactness argument then shows that~$u \in H^2_\loc(\Omega)$. Nice presentations of this technique are for instance contained in~\cite{E98} and~\cite{GM12}.

After this, several generalizations were achieved.
For example, the translation method has been successfully adapted to study nonlinear equations, too. Indeed, in~\cite{S77} and~\cite{D82} the authors deduced higher order regularity in both Sobolev and Besov classes for singular or degenerate operators of~$p$-Laplacian type. See also~\cite{M03,M03b} where similar fractional estimates were obtained in a non-differentiable vectorial setting. 

\medskip

The object of this note is the attempt of a generalization of the above discussed higher differentiability to a non-local analogue of equation~\eqref{2ordeq}, modelled upon the fractional Laplacian.

Given any open set~$\Omega \subset \R^n$, we consider a solution~$u$ of the linear equation
\begin{equation} \label{EKeq}
\E_K (u, \varphi) = \langle f, \varphi \rangle_{L^2(\Omega)} \quad \mbox{for any } \varphi \in C^\infty_0(\Omega),
\end{equation}
where~$f \in L^2(\Omega)$ and~$\E_K$ is defined by
$$
\E_K(u, \varphi) := \int_{\R^n} \int_{\R^n} \left( u(x) - u(y) \right) \left( \varphi(x) - \varphi(y) \right)  K(x, y) \, dx dy.
$$
Here~$K$ is a measurable function which is comparable~\emph{in the small} to the kernel of the fractional Laplacian. Indeed, if we take
$$
K(x, y) = |x - y|^{- n - 2 s},
$$
with~$s \in (0, 1)$, then~\eqref{EKeq} is the weak formulation of the equation
$$
(-\Delta)^s u = f \quad \mbox{in } \Omega,
$$
for the fractional Laplace operator of order~$2 s$ 
$$
(-\Delta)^s u(x) = 2 \, \PV \int_{\R^n} \frac{u(x) - u(y)}{|x - y|^{n + 2 s}} \, dy = 2 \lim_{\delta \rightarrow 0^+} \int_{\R^n \setminus B_\delta(x)} \frac{u(x) - u(y)}{|x - y|^{n + 2 s}} \, dy.
$$
On the other hand, more general kernels are admissible as well, possibly not translation invariant. However, if the kernel is not translation invariant, we need to impose on~$K$ some sort of~\emph{joint} local~$C^{0, s}$ regularity. We stress that this last hypothesis seems very natural to us. Indeed, while translation invariant kernels correspond in the local framework to the constant coefficient case, asking~$K$ to be locally H\"{o}lder continuous is a legitimate counterpart to the Lipschitz regularity assumed on the matrix~$A$ in~\eqref{2ordeq}.

\medskip

Integro-differential equations have been the object of a great variety of studies in recent years. A priori estimates for quite general linear equations were obtained in~\cite{BK05,S06} (H\"older estimates) and in~\cite{B09} (Schauder estimates). Other fundamental results in what concerns pointwise regularity were achieved by Caffarelli and Silvestre in~\cite{CS09, CS11}. The two authors developed there a theory for \emph{viscosity} solutions, in order to deal with general fully nonlinear equations. The framework considered here is instead that of \emph{weak} (or \emph{energy}) solutions. These two notions of solutions are of course very close, as it is discussed in~\cite{RS14} and~\cite{SV14}, but, since we have a datum~$f$ in~$L^2$, the weak formulation~\eqref{EKeq} seems to us more appropriate.

The literature on the regularity theory for weak solutions is indeed very rich and it is not possible to provide here an exhaustive account of the many contributions. Just to name a few, Kassmann addressed the validity of a Harnack inequality and established interior H\"{o}lder regularity for~\emph{non-local harmonic functions} through the language of Dirichlet forms (see~\cite{K07, K09, K11}). In~\cite{RS14} the authors obtained H\"{o}lder regularity up to the boundary for a Dirichlet problem driven by the fractional Laplacian. Concerning regularity results in Sobolev spaces,~$H^{2 s}$ estimates are proved in~\cite{DK12} for entire translation invariant equations. Also, the very recent~\cite{KMS15} provides higher differentiability/integrability in a nonlinear setting quite similar to ours.

\medskip

Here we show that a solution~$u$ of~\eqref{EKeq} has better weak (fractional) differentiability properties in the interior of~$\Omega$. By adapting the translation method to this non-local setting, we prove that
\begin{equation} \label{uN2s}
u \in N^{2 s, 2}_\loc(\Omega).
\end{equation}
Notice that the symbol~$N^{r, p}(\Omega)$, for~$r > 0$ and~$1 \le p < +\infty$, denotes here the so-called~\emph{Nikol'skii space}.

Since both Nikol'skii and fractional Sobolev spaces are part of the wider class of Besov spaces, standard embedding results within this scale allow us to deduce from~\eqref{uN2s} that
\begin{equation} \label{uH2s-eps}
u \in H^{2 s - \varepsilon}_\loc(\Omega),
\end{equation}
for any~$\varepsilon > 0$.

We do not know whether or not~\eqref{uH2s-eps} is the optimal interior regularity for solutions of~\eqref{EKeq} in the Sobolev class. While one would arguably expect~$u$ to belong to~$H^{2 s}_\loc(\Omega)$, there is no hope in general to extend such regularity up to the boundary, as discussed in Section~\ref{optsec}. Finally, we stress that the exponent~$2 s - \varepsilon$ still provides Sobolev regularity for the gradient of~$u$, when~$s > 1 / 2$.

We point out that, almost concurrently to the present work and independently from it, a result rather similar to~\eqref{uH2s-eps} has been obtained in~\cite{BL15}. Indeed, the authors address there the problem of establishing higher Sobolev regularity for a nonlinear, superquadratic generalization of equation~\eqref{EKeq}. When restricted to the linear case, their result is analogous to ours, for~$s \le 1/2$, and slightly weaker, for~$s > 1/2$.

\medskip

In the upcoming section we specify the framework in which the model is set. We give formal definitions of the notion of solution and of the class of kernels under consideration. Moreover, we introduce the various functional spaces that are necessary for these purposes. After such preliminary work, we are then in position to give the precise statements of our results.

\section{Definitions and formal statements} \label{defsec}

Let~$n \in \N$ and~$s \in (0, 1)$. The kernel~$K: \R^n \times \R^n \to [0, +\infty]$ is assumed to be measurable and symmetric\footnote{We stress that the symmetry hypothesis does not really play much of a role here. Indeed, if one considers instead a non-symmetric kernel~$K$, this can be written as the sum of its symmetric and anti-symmetric parts
$$
K_{\rm{sym}}(x, y) := \frac{K(x, y) + K(x, y)}{2} \quad \mbox{and} \quad K_{\rm{asym}}(x, y) := \frac{K(x, y) - K(y, x)}{2}.
$$
But then, it is easily shown that~$K_{\rm{asym}}$ cancels out in~\eqref{bilform}, thus leading to an equation driven by the symmetric kernel~$K_{\rm{sym}}$. Hence, we may and do assume~$K$ symmetric from the outset.

In this regard, we refer the interested reader to~\cite{FKV13}, where a class of integro-differential equations with non-symmetric kernels are studied.}, that is
\begin{equation} \label{Ksym}
K(x, y) = K(y, x) \quad \mbox{for a.a. } x, y \in \R^n.
\end{equation}
We also require~$K$ to satisfy
\begin{subequations}
\label{Kbounds}
\begin{align} \label{Kbounds1}
\lambda \le |x - y|^{n + 2 s} K(x, y) \le \Lambda & \quad \mbox{for a.a. } x, y \in \R^n, \, |x - y| < 1, \\ \label{Kbounds2}
0 \le |x - y|^{n + \beta} K(x, y) \le M & \quad \mbox{for a.a. } x, y \in \R^n, \, |x - y| \ge 1,
\end{align}
\end{subequations}
for some constants~$\Lambda \ge \lambda > 0$,~$\beta, M > 0$, and
\begin{equation} \label{Kreg}
|x - y|^{n + 2 s} \left| K(x + z, y + z) - K(x, y) \right| \le \Gamma |z|^s,
\end{equation}
for a.a.~$x, y, z \in \R^n$, with~$|x - y|, |z| < 1$, and for some~$\Gamma > 0$.

Condition~\eqref{Kbounds1} tells that the kernel~$K$ is controlled from above and below by that of the fractional Laplacian when~$x$ and~$y$ are close. Conversely, when~$|x - y|$ is large, the behaviour of~$K$ could be more general, as expressed by~\eqref{Kbounds2}.  Under these hypotheses a great variety of kernels could be encompassed, as for instance truncated ones or having non-standard decay at infinity. Naturally, these requirements are fulfilled (with~$\beta = 2 s$) when~$K$ is globally comparable to the kernel of the fractional Laplacian, that is when~\eqref{Kbounds1} holds a.e. on the whole~$\R^n \times \R^n$.

On the other hand,~\eqref{Kreg} asserts that the map
$$
(x, y) \longmapsto |x - y|^{n + 2 s} K(x, y),
$$
is locally uniformly~$C^{0, s}$ regular, jointly in the two variables~$x$ and~$y$.
Clearly,~\eqref{Kreg} is satisfied by translation invariant kernels, i.e. those in the form
\begin{equation} \label{Kti}
K(x, y) = k(x - y),
\end{equation}
for some measurable~$k: \R^n \to [0, +\infty]$. But more general choices are possible, as for instance kernels of the type
$$
K(x, y) = \frac{a(x, y)}{|x - y|^{n + 2 s}},
$$
with~$a \in C^{0, s}(\R^n \times \R^n)$. We also stress that~\eqref{Kreg} may be actually weakened by requiring it to hold only inside the set~$\Omega$ where the equation will be valid.

\medskip

In order to formulate the equation and state our main results, we introduce the following functional framework.

Let~$s > 0$,~$1 \le p < +\infty$ and~$U$ be any open set of~$\R^n$. We indicate with~$L^p(U)$ the standard Lebesgue space and with~$W^{s, p}(U)$ the (fractional) Sobolev space as defined, for instance, in the monograph~\cite{DPV12}. Of course,~$H^s(U) := W^{s, 2}(U)$.

Restricting ourselves to~$s \in (0, 1)$, we denote with~$X(U)$ the space of measurable functions~$u : \R^n \to \R$ such that
$$
u|_U \in L^2(U) \quad \mbox{and} \quad (x, y) \longmapsto \left( u(x) - u(y) \right) \sqrt{K(x, y)} \in L^2(\C_U),
$$
where
$$
\C_U := \left( \R^n \times \R^n \right) \setminus \left( \left( \R^n \setminus U \right) \times \left( \R^n \setminus U \right) \right) \subset \R^n \times \R^n.
$$
Notice that, by virtue of~\eqref{Kbounds}, if~$u \in X(U)$ and~$V$ is a bounded open set contained in~$U$, then~$u|_V \in H^s(V)$. In addition,~$X_0(U)$ is the subspace of~$X(U)$ composed by the functions which vanish a.e. outside~$U$. We refer the reader to~\cite[Section~5]{SV13} for informations on very similar spaces of functions.

As it is customary, given any space~$F(U)$ of functions defined on a set~$U$, we say that
$$
u \in F_\loc(U) \quad \mbox{if and only if} \quad u|_V \in F(V) \mbox{ for any domain } V \subset \subset U.
$$

Let now~$\Omega$ be a fixed open set of~$\R^n$. For~$u \in X(\Omega)$ and~$\varphi \in X_0(\Omega)$, it is well-defined the bilinear form
\begin{equation} \label{bilform}
\E_K(u, \varphi) := \int_{\R^n} \int_{\R^n} \left( u(x) - u(y) \right) \left( \varphi(x) - \varphi(y) \right) K(x, y) \, dx dy.
\end{equation}
Given~$f \in L^2(\Omega)$, we say that~$u \in X(\Omega)$ is a solution of
\begin{equation} \label{maineq}
\E_K(u, \cdot) = f \quad \mbox{in } \Omega,
\end{equation}
if
\begin{equation} \label{weakform}
\E_K(u, \varphi) = \langle f, \varphi \rangle_{L^2(\Omega)} \quad \mbox{for any } \varphi \in X_0(\Omega).
\end{equation}
We remark that, for instance when~$K$ is symmetric and translation invariant, i.e. as in~\eqref{Kti} with~$k$ even, then~\eqref{weakform} is the weak formulation of the equation
$$
\L_k u = f \quad \mbox{in } \Omega,
$$
where the operator~$\L_k$ is defined - for~$u$ sufficiently smooth and bounded - by
$$
\L_k u(x) := 2 \, \PV \int_{\R^n} \left( u(x) - u(y) \right) k(x - y) \, dy.
$$

As a last step towards the first theorem, we introduce a weighted Lebesgue space which we will require the solutions to lie in. Given a measurable function~$w: \R^n \to [0, +\infty)$, we say that~$u \in L^1_w(\R^n)$ if and only if
$$
u: \R^n \to \R \mbox{ is measurable} \quad \mbox{and} \quad \| u \|_{L^1_w(\R^n)} := \int_{\R^n} |u(x)| w(x) \, dx < +\infty.
$$
In what follows we consider weights of the form
\begin{equation} \label{wsdef}
w_{x_0, \beta}(x) = \frac{1}{1 + |x - x_0|^{n + \beta}},
\end{equation}
for~$x_0 \in \R^n$ and~$\beta > 0$ as in~\eqref{Kbounds2}. We denote the corresponding spaces just with~$L^1_{x_0, \beta}(\R^n)$ and we adopt the same notation for their norms. Also, we simply write~$L^1_\beta(\R^n)$ when~$x_0$ is the origin. Notice that, in fact, the space~$L^1_{x_0, \beta}(\R^n)$ does not depend on~$x_0$ and different choices for the base point~$x_0$ lead to equivalent norms. Lastly, we observe that, in consequence of the fact that~$w_{x_0, \beta} \in L^1(\R^n) \cap L^\infty(\R^n)$, the space~$L^1_\beta(\R^n)$ contains both~$L^\infty(\R^n)$ and~$L^1(\R^n)$.

\medskip

With all this in hand, we are now ready to state the first and principal result of this note.

\begin{theorem} \label{sobmainthm}
Let~$s \in (0, 1)$,~$\beta > 0$ and~$\Omega \subset \R^n$ be an open set. Assume that~$K$ satisfies assumptions~\eqref{Ksym},~\eqref{Kbounds} and~\eqref{Kreg}. Let~$u \in X(\Omega) \cap L^1_\beta(\R^n)$ be a solution of~\eqref{maineq}, with~$f \in L^2(\Omega)$. Then,~$u \in H_\loc^{2 s - \varepsilon}(\Omega)$ for any small~$\varepsilon > 0$ and, for any domain~$\Omega' \subset \subset \Omega$,
\begin{equation} \label{sobmainest}
\| u \|_{H^{2 s - \varepsilon}(\Omega')} \le C \left( \| u \|_{L^2(\Omega)} + \| u \|_{L^1_\beta(\R^n)} + \| f \|_{L^2(\Omega)} \right),
\end{equation}
for some constant~$C > 0$ depending on~$n$,~$s$,~$\beta$,~$\lambda$,~$\Lambda$,~$M$,~$\Gamma$,~$\Omega$,~$\Omega'$ and~$\varepsilon$.
\end{theorem}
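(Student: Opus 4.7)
The plan is to adapt Nirenberg's translation method to the nonlocal setting in order to first establish~$u \in N^{2s,2}_\loc(\Omega)$, and then deduce~\eqref{sobmainest} from the standard Besov-scale embedding~$N^{2s,2}_\loc \hookrightarrow H^{2s-\varepsilon}_\loc$, valid for every~$\varepsilon \in (0, 2s)$.

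The first step is to derive the equation satisfied by the finite difference~$\Delta_z u := u(\cdot+z) - u(\cdot)$ for~$z \in \R^n$ with~$|z|$ sufficiently small. Changing variables in~\eqref{weakform} shows that the translated function~$u_z := u(\cdot+z)$ solves~$\E_{K_z}(u_z, \varphi) = \langle f_z, \varphi\rangle$ for test functions supported in~$\Omega - z$, where~$K_z(x,y) := K(x+z, y+z)$ and~$f_z := f(\cdot+z)$. Subtracting from~\eqref{weakform} yields the key identity
$$
\E_{K_z}(\Delta_z u, \varphi) = \langle f_z - f, \varphi\rangle + \E_{K - K_z}(u, \varphi),
$$
valid for~$\varphi \in X_0(\Omega'')$ whenever~$\Omega'' + z \subset \Omega$. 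Fixing~$\Omega' \subset\subset \Omega'' \subset\subset \Omega$ and a cutoff~$\eta \in C^\infty_c(\Omega'')$ with~$\eta \equiv 1$ on~$\Omega'$, I would test with~$\varphi = \eta^2 \Delta_z u$. The classical nonlocal algebraic identity
$$
(a-b)\bigl(\eta(x)^2 a - \eta(y)^2 b\bigr) = \bigl(\eta(x) a - \eta(y) b\bigr)^2 - ab\bigl(\eta(x) - \eta(y)\bigr)^2,
$$
applied with~$a = \Delta_z u(x)$ and~$b = \Delta_z u(y)$, together with the lower bound in~\eqref{Kbounds1} and the Lipschitz regularity of~$\eta$, then produces a Caccioppoli-type inequality of the form
$$
[\eta\Delta_z u]^2_{H^s(\R^n)} \lesssim \|\Delta_z u\|^2_{L^2(\Omega'')} + \bigl|\E_{K-K_z}(u, \eta^2 \Delta_z u)\bigr| + \bigl|\langle f_z - f, \eta^2 \Delta_z u\rangle\bigr| + \mathcal{T}(u),
$$
where~$\mathcal{T}(u)$ collects the long-range contributions that survive when one integration variable lies in~$\R^n \setminus \Omega''$.

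The goal now is to show that every summand on the right is bounded by~$C|z|^{2s}$ times the norms appearing in~\eqref{sobmainest}. The term~$\|\Delta_z u\|_{L^2(\Omega'')}^2$ is immediately~$\lesssim |z|^{2s}[u]^2_{H^s}$, because the a~priori estimate for weak solutions of~\eqref{maineq} already yields~$u \in H^s_\loc(\Omega)$. The kernel-difference term is split at~$|x-y| = 1$: in the near-diagonal regime one invokes~\eqref{Kreg} to extract a factor~$|z|^s$ and applies Cauchy--Schwarz to obtain~$|z|^s [u]_{H^s}[\eta^2 \Delta_z u]_{H^s}$, which Young's inequality converts into an absorbable~$\varepsilon[\eta\Delta_z u]^2_{H^s}$ plus~$C|z|^{2s}$; far from the diagonal and in~$\mathcal{T}(u)$ one uses~\eqref{Kbounds2} together with the weight~$w_{x_0,\beta}$ to bound everything by~$\|u\|_{L^1_\beta(\R^n)}$. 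The genuinely delicate step, and what I expect to be the main obstacle, is the data term: a naive~$L^2$-$L^2$ pairing only gives~$\|f_z - f\|_{L^2}\|\eta^2\Delta_z u\|_{L^2} \lesssim |z|^s$, which is not strong enough to close the argument at order~$|z|^{2s}$. The right trick is to instead exploit the~$H^{-s}$-gain of the translation operator acting on an~$L^2$ datum,
$$
\|f_z - f\|_{H^{-s}(\R^n)} \le C |z|^s \|f\|_{L^2(\Omega)},
$$
(valid after extending~$f$ by zero and proved in one line by Fourier analysis), and pair it with the multiplier bound~$\|\eta^2 \Delta_z u\|_{H^s(\R^n)} \le C_\eta \|\eta\Delta_z u\|_{H^s(\R^n)}$, so that a final application of Young's inequality again produces an absorbable~$\varepsilon[\eta\Delta_z u]^2_{H^s}$ plus~$C|z|^{2s} \|f\|_{L^2}^2$.

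Combining these estimates I expect to obtain
$$
\|\Delta_z u\|_{H^s(\Omega')} \le C |z|^s \bigl( \|u\|_{L^2(\Omega)} + \|u\|_{L^1_\beta(\R^n)} + \|f\|_{L^2(\Omega)} \bigr)
$$
uniformly for small~$z$. The elementary fractional translation estimate~$\|\Delta_z v\|_{L^2(\R^n)} \le C|z|^s [v]_{H^s(\R^n)}$, applied to~$v = \eta\Delta_z u$, then yields~$\|\Delta_z^2 u\|_{L^2(\Omega''')} \le C|z|^{2s}$ on a slightly smaller subdomain~$\Omega''' \subset\subset \Omega'$, which is precisely the second-order-difference characterization of~$u \in N^{2s,2}_\loc(\Omega)$. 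The conclusion~\eqref{sobmainest} then follows from the embedding of~$N^{2s,2}$ into~$H^{2s-\varepsilon} = W^{2s-\varepsilon,2}$ within the Besov scale, with the extra~$\varepsilon$-dependence of the constant coming from that embedding.
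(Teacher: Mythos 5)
Your global architecture coincides with the paper's (translation method~$\to$ Nikol'skii estimate~$\to$ Besov-scale embedding into~$H^{2s-\varepsilon}$), and your~$H^{-s}$--duality treatment of the datum is a legitimate variant of what the paper does (there, one of the increments carried by the test function is traded for a factor~$|z|^s$ via~$H^s\subset N^{s,2}$). However, there is a genuine gap in the central estimate, precisely where you write that the tail term~$\mathcal{T}(u)$ and the far-off-diagonal part of~$\E_{K-K_z}(u,\eta^2\Delta_z u)$ are ``bounded by~$\|u\|_{L^1_\beta(\R^n)}$''. Because you difference the \emph{equation} first, the far-field factor in those terms is~$\Delta_z u(x)$ (resp.\ $u$ paired against~$K-K_z$ at distances~$|x-y|\ge 1$, where~\eqref{Kreg} gives no smallness), and the only control available at infinity is~$\|u\|_{L^1_\beta(\R^n)}$ \emph{with no power of~$|z|$}: the hypotheses give no quantitative smoothness of~$u$ outside~$\Omega$. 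The only~$|z|$-gain then comes from the localized factor,~$\|\eta^2\Delta_z u\|_{L^1}\lesssim |z|^s[u]_{H^s}$, so these terms are~$O(|z|^{s})$, not~$O(|z|^{2s})$. Fed back into your scheme this yields~$[\eta\Delta_z u]_{H^s}\lesssim |z|^{s/2}$ and hence only~$u\in N^{3s/2,2}_\loc$, short of the claim.

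The paper's remedy is structural: it never differences the equation. Instead it tests~\eqref{weakform} with~$\Delta_{-z}^2(\eta^2\Delta_z^2 u)$ and redistributes the increments by a discrete integration by parts (Lemma~\ref{discintpartlem}), so that in every long-range term the far field sees plain~$u(x)$ (for which~$\|u\|_{L^1_\beta}$ with no~$|z|$-power suffices) while \emph{all} the increments sit on the compactly supported factor, whose~$L^2$ norm is~$O(|z|^{2s})$ up to absorbable pieces. Even after this fix a second obstruction appears, which your first-order scheme also cannot cross: the integration by parts produces translated-cutoff terms controlled by~$|(B+z)\Delta B|\lesssim |z|$ (Lemma~\ref{defectlem}), which after Young's inequality contribute~$O(|z|^{1/2+s})$. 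This is compatible with the target~$O(|z|^{2s})$ only when~$s\le 1/2$; for~$s>1/2$ one obtains~$N^{s+1/2,2}$ at first pass and must bootstrap using second-order increments to reach~$N^{2s,2}$ --- the paper explicitly remarks that first-order increments are conclusive only for~$s\le 1/2$. So your proposal is missing both the mechanism that makes the tail term~$O(|z|^{2s})$ and the two-step bootstrap needed above~$s=1/2$.
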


The technique we adopt to prove Theorem~\ref{sobmainthm} is basically the translation method of Nirenberg, suitably adjusted to cope with the difficulties arising in this fractional, non-local framework. However, this strategy does not immediately lead to an estimate in Sobolev spaces. In fact, it provides that the solution belongs to a slightly different functional space, which is well-studied in the literature and is often referred to as~\emph{Nikol'skii space}. We briefly introduce such class here below.

Let~$U$ be a domain of~$\R^n$. Given~$k \in \N$ and~$z \in \R^n$, let
\begin{equation} \label{Ukz}
U_{k z} := \left\{ x \in U : x + i z \in U \mbox{ for any } i = 1, \ldots, k \right\}.
\end{equation}
Observe that, by definition,
\begin{equation} \label{kzinjz}
U_{k z} \subseteq U_{j z} \subseteq U \qquad \mbox{ if }  j, k \in \N \mbox{ and } j \le k.
\end{equation}
For any~$z \in \R^n$ we also define~$\tau_z u(x) := u(x + z)$ and
$$
\Deltaz u(x) := \tau_z u(x) - u(x),
$$
for any~$x \in U_z$. Sometimes we will need to deal with increments along the~\emph{diagonal} for the kernel~$K$, as previously done in~\eqref{Kreg}. With a slight abuse of notation, we write
$$
\tau_z K(x, y) := K(x + z, y + z) \quad \mbox{ and } \quad \Deltaz K(x, y) := \tau_z K(x, y) - K(x, y).
$$
We also consider increments of higher orders. For any~$k \in \N$ we set
$$
\Delta_z^k u(x) := \, \Deltaz \Delta_z^{k - 1} u(x) = \sum_{i = 0}^k (-1)^{k - i} \binom{k}{i} \tau_{i z} u(x),
$$
for any~$x \in U_{k z}$, with the convention that~$\Delta_z^0 u = u$. Of course,~$\Delta_z^1 u = \Deltaz u$. Moreover, notice that by~\eqref{kzinjz} all~$\Delta_z^j u$, as~$j = 0, 1, \ldots, k$, are well-defined in~$U_{k z}$.

Given~$s \in (0, 2)$ and~$1 \le p < +\infty$, the Nikol'skii space~$N^{s, p}(U)$ is defined as the space of functions~$u \in L^p(U)$ such that
\begin{equation} \label{niksemintro}
[u]_{N^{s, p}(U)} := \sup_{z \in \R^n \setminus \{ 0 \}} |z|^{- s} \| \Delta_z^2 u \|_{L^p(U_{2 z})} < +\infty.
\end{equation}
The norm
$$
\| u \|_{N^{s, p}(U)} := \| u \|_{L^p(U)} + [u]_{N^{s, p}(U)},
$$
makes~$N^{s, p}(U)$ a Banach space. We point out that the restriction to~$s < 2$ is assumed here only to avoid unnecessary complications in the definition of the semi-norm~\eqref{niksemintro}. By the way, the above range for~$s$ is large enough for our scopes and, thus, there is no real need to deal with more general conditions. Nevertheless, such limitation will not be considered anymore in Section~\ref{besovsec}, where a deeper look at the space~$N^{s, p}(U)$ will be given.

\medskip

Now that the definition of Nikol'skii spaces has been recalled, we may finally head to our second main result.

\begin{theorem} \label{nikmainthm}
Let~$s \in (0, 1)$,~$\beta > 0$ and~$\Omega \subset \R^n$ be an open set. Assume that~$K$ satisfies assumptions~\eqref{Ksym},~\eqref{Kbounds} and~\eqref{Kreg}. Let~$u \in X(\Omega) \cap L^1_\beta(\R^n)$ be a solution of~\eqref{maineq}, with~$f \in L^2(\Omega)$. Then,~$u \in N^{2 s, 2}_\loc(\Omega)$ and, for any domain~$\Omega' \subset \subset \Omega$,
\begin{equation} \label{nikmainest}
\| u \|_{N^{2 s, 2}(\Omega')} \le C \left( \| u \|_{L^2(\Omega)} + \| u \|_{L^1_\beta(\R^n)} + \| f \|_{L^2(\Omega)} \right),
\end{equation}
for some constant~$C > 0$ depending on~$n$,~$s$,~$\beta$,~$\lambda$,~$\Lambda$,~$M$,~$\Gamma$,~$\Omega$ and~$\Omega'$.
\end{theorem}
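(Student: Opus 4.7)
The plan is to adapt Nirenberg's translation method to the non-local setting. Fix nested open sets $\Omega' \subset\subset \Omega'' \subset\subset \Omega''' \subset\subset \Omega$ and a cutoff $\eta \in C^\infty_0(\Omega'')$ with $\eta \equiv 1$ on $\Omega'$. For $|z|$ sufficiently small, the function $\varphi_z := \Delta_{-z}(\eta^2 \Delta_z u)$ lies in $X_0(\Omega)$ and is therefore admissible in~\eqref{weakform}. A change of variables shifting $z$ from $u$ onto the kernel yields, with $\psi := \eta^2 \Delta_z u$,
\[
\E_K(u,\Delta_{-z}\psi) = \int_{\R^n}\!\int_{\R^n}(\Delta_z u(x) - \Delta_z u(y))(\psi(x) - \psi(y))\, K(x+z,y+z)\, dx\, dy + \mathcal{B}(z),
\]
where $\mathcal{B}(z) := \int\!\int (u(x)-u(y))(\psi(x)-\psi(y))\, \Delta_z K(x,y)\, dx\, dy$ encodes the defect of translation invariance. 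The pointwise identity $(a-b)(pa-qb) = \tfrac{1}{2}(p+q)(a-b)^2 + \tfrac{1}{2}(p-q)(a^2-b^2)$, applied with $p=\eta^2(x)$, $q=\eta^2(y)$, $a=\Delta_z u(x)$, $b=\Delta_z u(y)$, splits the first integral into the non-negative quadratic piece
\[
\mathcal{A}_1 := \tfrac{1}{2}\int\!\int (\eta^2(x) + \eta^2(y))(\Delta_z u(x) - \Delta_z u(y))^2\, K(x+z,y+z)\, dx\, dy
\]
and a cross term, which Young's inequality and $|\eta^2(x)-\eta^2(y)|\leq C|x-y|$ bound by $\tfrac{1}{4}\mathcal{A}_1 + C\|\Delta_z u\|_{L^2(\Omega'')}^2$.

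The error $\mathcal{B}(z)$ is split at $|x-y|=1$. On the near-diagonal regime \eqref{Kreg} gives $|\Delta_z K(x,y)| \leq \Gamma |z|^s |x-y|^{-n-2s}$, and a Cauchy--Schwarz--Young argument with one factor measured in the $H^s$-seminorm of $u$ on $\Omega'''$ and the other in $\sqrt{\mathcal{A}_1}$ bounds this piece by $\tfrac{1}{4}\mathcal{A}_1 + C|z|^{2s}$. On the far-diagonal regime \eqref{Kbounds2} applies to both $K(x+z,y+z)$ and $K(x,y)$; splitting further according to whether $x$ and $y$ lie inside or outside $\Omega'''$ lets one absorb the tail of $u$ into the weighted norm $\|u\|_{L^1_\beta(\R^n)}$, while the bulk contribution is again controlled by $\|\Delta_z u\|_{L^2(\Omega'')}$. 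The pivotal ingredient in these bounds is a preliminary $H^s$-Caccioppoli estimate $\|u\|_{H^s(\Omega''')} \leq C(\|u\|_{L^2(\Omega)} + \|u\|_{L^1_\beta(\R^n)} + \|f\|_{L^2(\Omega)})$, which, paired with the classical inequality $\|\Delta_z v\|_{L^2} \leq C|z|^s [v]_{H^s}$ (for $s \in (0,1)$), upgrades $\|\Delta_z u\|_{L^2(\Omega'')}$ to $C|z|^s$.

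The data term $\langle f, \varphi_z\rangle_{L^2(\Omega)}$ is the critical point: a direct Cauchy--Schwarz would only yield $C\|f\|_{L^2(\Omega)}|z|^s$, which would degrade the final outcome to $u\in N^{3s/2,2}_{\loc}$. The remedy is to exploit the Besov-type identity $\|\Delta_{-z}\psi\|_{L^2} \leq C|z|^s[\psi]_{H^s(\R^n)}$ together with the fractional Leibniz bound $[\eta^2\Delta_z u]_{H^s(\R^n)}^2 \leq C(\mathcal{A}_1 + \|\Delta_z u\|_{L^2(\Omega'')}^2) \leq C(\mathcal{A}_1 + |z|^{2s})$; Cauchy--Schwarz and Young then give $|\langle f,\varphi_z\rangle| \leq \tfrac{1}{4}\mathcal{A}_1 + C|z|^{2s}(\|f\|_{L^2(\Omega)}^2 + 1)$. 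Collecting all estimates, absorbing the $\mathcal{A}_1$-contributions, and invoking \eqref{Kbounds1}, one gets $[\eta\Delta_z u]_{H^s(\R^n)}^2 \leq C\mathcal{A}_1 \leq C|z|^{2s}$. A final application of $\|\Delta_z w\|_{L^2} \leq C|z|^s[w]_{H^s}$ with $w = \eta\Delta_z u$ delivers $\|\Delta_z(\eta\Delta_z u)\|_{L^2} \leq C|z|^{2s}$, and since $\eta\equiv 1$ on $\Omega'$ the left-hand side coincides with $\|\Delta_z^2 u\|_{L^2(\Omega'_{2z})}$ for $|z|$ small; for large $|z|$, the trivial bound $\|\Delta_z^2 u\|_{L^2} \leq 4\|u\|_{L^2(\Omega)}$ closes the argument and yields \eqref{nikmainest}. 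The hard part is precisely the Besov-type reduction of the data term, which is what secures the full $|z|^{2s}$ gain, together with the tail/near-diagonal decomposition of $\mathcal{B}(z)$ that simultaneously exploits all three structural assumptions \eqref{Kbounds1}, \eqref{Kbounds2}, and \eqref{Kreg}.
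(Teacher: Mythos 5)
Your overall architecture --- testing with a translated increment, shifting the translation onto the kernel, invoking \eqref{Kreg} for the near-diagonal commutator $\Delta_z K$, a Caccioppoli-type absorption, and recovering the extra $|z|^s$ on the datum through $\|\Delta_{-z}\psi\|_{L^2}\le C|z|^s[\psi]_{H^s}$ --- is the right one and matches the paper's. But there are two genuine gaps. First, you write the identity over all of $\R^n\times\R^n$, and the resulting cross terms involve $\Delta_z u(x)$ at points $x$ far from $\supp\eta$. There $u$ is only in $L^1_\beta(\R^n)$ (and inside $\Omega$ but away from the ball only in $L^2$, not $H^s$), so $\Delta_z u$ carries no smallness in $|z|$: your term $\tfrac12\iint(\eta^2(x)-\eta^2(y))(\Delta_zu(x)^2-\Delta_zu(y)^2)\,K(x+z,y+z)\,dx\,dy$ contains $\int |\Delta_zu(x)|^2 w_{x_0,\beta}(x)\,dx$ over the far region, which need not even be finite under the hypotheses, and the repaired version (estimating via $|u(x)||u(y)|$) is only $O(|z|^{s})$, not $O(|z|^{2s})$. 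The far-field interactions must be split off \emph{before} the discrete integration by parts, keeping the full difference $\Delta_{-z}\psi$ on the test function there; this is exactly what Lemma~\ref{discintpartlem} and the $J$ term accomplish, at the price of boundary terms supported on symmetric differences of translated balls (the term $I_3$). Relatedly, your algebraic identity produces only one factor $|\eta^2(x)-\eta^2(y)|\le C|x-y|$, and $|x-y|^{1-n-2s}$ is not locally integrable for $s\ge 1/2$; one needs the square $(\eta(x)-\eta(y))^2\le C|x-y|^2$ as in \eqref{I5}.

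Second, and more fundamentally, once those localization terms are accounted for they decay only like $|z|^{1/2}$ per factor after Young's inequality (the set $(B+z)\,\Delta\, B$ has measure of order $|z|$, and $u$ is merely $L^2$ there), so a single pass with first-order increments yields only $u\in N^{\min(2s,\,s+1/2),\,2}_{\loc}(\Omega)$, which is strictly weaker than the claim precisely when $s>1/2$. The paper overcomes this by testing with the second-order increment $\varphi=\Delta_{-z}^2(\eta^2\Delta_z^2u)$ and bootstrapping: a first run gives $u\in N^{s+1/2,2}_{\loc}$, which upgrades $\|\Delta_z^2u\|_{L^2}$ from $O(|z|^{s})$ to $O(|z|^{s+1/2})$, and a second run with a different weight exponent $\sigma$ then reaches $N^{2s,2}$. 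Note also that for $s>1/2$ one has $2s>1$, so the target seminorm must be built on second differences in any case: a bound $\|\Delta_z v\|_{L^2}\le C|z|^{r}$ with $r>1$ and $v$ independent of $z$ forces $v$ to be locally constant. Your proposal contains no mechanism for this bootstrap, so as written it can only establish the theorem for $s\le 1/2$.
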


In light of this estimate, Theorem~\ref{sobmainthm} follows more or less immediately. To see this, it is helpful to understand Sobolev and Nikol'skii spaces in the context of~\emph{Besov spaces}. For~$s \in (0, 2)$,~$1 \le p < +\infty$ and~$1 \le \lambda \le +\infty$, the Besov space~$B_{\mini{\lambda}}^{s, p}(U)$ is the space of functions~$u \in L^p(U)$ such that~$[ u ]_{B_{\mini{\lambda}}^{s, p}(U)} < + \infty$, where
$$
[ u ]_{B_{\mini{\lambda}}^{s, p}(U)} :=
\begin{dcases}
\left( \int_{\R^n} \left( |z|^{- s} \| \Delta_z^2 u \|_{L^p(U_{2 z})} \right)^\lambda \frac{dz}{|z|^n} \right)^{1 / \lambda} & \quad \mbox{if } 1 \le \lambda < + \infty, \\
\sup_{z \in \R^n \setminus \{ 0 \}} |z|^{- s} \| \Delta_z^2 u \|_{L^p(U_{2 z})} & \quad \mbox{if } \lambda = +\infty.
\end{dcases}
$$
Observe that, by definition,~$B_{\mini{\infty}}^{s, p}(U) = N^{s, p}(U)$, while the equivalence~$B_{\mini{p}}^{s, p}(U) = W^{s, p}(U)$ is also true, though less trivial. Then, since there exist continuous embeddings
\begin{equation} \label{besinclintro}
B_{\mini{\nu}}^{s, p}(U) \subset B_{\mini{\lambda}}^{r, p}(U),
\end{equation}
as~$1 \le \lambda \le \nu \le + \infty$ and~$1 < r < s < + \infty$, it follows
$$
N^{s, p}(U) \subset W^{r, p}(U).
$$
Consequently, up to some minor details that will be discussed later in Section~\ref{sobmainsec}, Theorem~\ref{sobmainthm} is a consequence of Theorem~\ref{nikmainthm}.

Of course, Theorem~\ref{nikmainthm} and inclusion~\eqref{besinclintro} yield estimates in many other Besov spaces for the solution~$u$ of~\eqref{maineq}. Basically,~$u$ lies in any~$B_{\mini{\lambda}, \loc}^{2 s - \varepsilon, 2}(\Omega)$, with~$\varepsilon > 0$ and~$1 \le \lambda \le +\infty$.

\medskip

We point out here that throughout the paper the same letter~$c$ is used to denote a positive constant which may change from line to line and depends on the various parameters involved.

\medskip

The rest of the paper is organized as follows.

In Section~\ref{besovsec} we review some basic material on Sobolev and Nikol'skii spaces. To keep a leaner notation, we do not approach Besov spaces in their full generality and restrict in fact to the two classes to which we are interested. Despite every assertion of this section is classical and surely well-known to the experts, we choose to include here the few results that will be used afterwards, in order to make the work as self-contained as possible.

The subsequent two sections are devoted to some auxiliary results. Section~\ref{auxsec} is concerned with a couple of technical lemmata that deal with a discrete integration by parts formula and an estimate for the defect of two translated balls. In Section~\ref{caccsec}, on the other hand, we prove a non-local version of the classical Caccioppoli inequality.

The main results are proved in Sections~\ref{mainsec} and~\ref{sobmainsec}.

Finally, Section~\ref{optsec} contains some comments on the possible optimal global regularity for the Dirichlet problem associated to~\eqref{maineq}.

\section{Preliminaries on Sobolev and Nikol'skii spaces} \label{besovsec}

We collect here some general facts about fractional Sobolev spaces and Nikol'skii spaces. As said before, we avoid dealing with the wider class of Besov spaces in order not to burden the notation too much. For more complete and exhaustive presentations we refer the interested reader to the books by Triebel,~\cite{T83, T92, T06} and~\cite{T95}.

We remark that the proofs displayed only make use of integration techniques, mostly inspired by~\cite{S90}. While some results can not be justified with such elementary arguments, we still provide specific references to the above mentioned books.

\medskip

Let~$U \subset \R^n$ be a bounded domain with~$C^\infty$ boundary\footnote{Most of the assertions contained in this section should be also true under less restrictive hypotheses on the boundary of the set. Of course, the definitions of the spaces require no assumptions at all on the boundary and other results are extended in the literature to Lipschitz sets. Unfortunately, we have not been able to find completely satisfactory references for Proposition~\ref{sobaltnorm}, and its counterpart for Nikol'skii spaces, under such weaker assumptions. Anyway, the limitation to~$C^\infty$ domains will not have any influence on our applications.}. Let~$1 \le p < +\infty$ and~$s > 0$, with~$s \notin \N$. Write~$s = k + \sigma$, with~$k \in \N \cup \{ 0 \}$ and~$\sigma \in (0, 1)$. We recall that the fractional Sobolev space~$W^{s, p}(U)$ is defined as the set of functions
$$
W^{s, p}(U) := \left\{ u \in W^{k, p}(U) : [D_\alpha u]_{W^{\sigma, p}(U)} < +\infty \mbox{ for any } |\alpha| = k \right\},
$$
where, for~$v \in L^p(U)$,
$$
[v]_{W^{\sigma, p}(U)} := \left( \int_U \int_U \frac{|v(x) - v(y)|^p}{|x - y|^{n + \sigma p}} \, dx dy \right)^{1 / p}.
$$
Clearly,~$\alpha$ indicates a multi-index, i.e.~$\alpha = (\alpha_1, \ldots, \alpha_n)$ with~$\alpha_i \in \N \cup \{ 0 \}$, and~$|\alpha| = \alpha_1 + \cdots + \alpha_n$ is its modulus. Moreover,~$W^{k, p}(\Omega)$, for~$k \in \N$, denotes the standard Sobolev space and, when~$k = 0$, we understand~$W^{0, p}(U) = L^p(U)$. The space~$W^{s, p}(U)$ equipped with the norm
$$
\| u \|_{W^{s, p}(U)} := \| u \|_{W^{k , p}(U)} + \sum_{|\alpha| = k} [D^\alpha u]_{W^{\sigma, p}(U)},
$$
is a Banach space.

Notice that, for~$v \in L^p(U)$,
\begin{align*}
[v]_{W^{\sigma, p}(U)} & = \left( \int_U \int_U \frac{|v(x) - v(y)|^p}{|x - y|^{n + \sigma p}} \, dx dy \right)^{1 / p} \\
& = \left( \int_{\R^n} \left( \int_{U_z} \frac{|v(x + z) - v(x)|^p}{|z|^{n + \sigma p}} \, dx \right) dz \right)^{1 / p} \\
& = \left( \int_{\R^n} \left( |z|^{- \sigma} \| \Deltaz v \|_{L^p(U_z)} \right)^p \frac{dz}{|z|^n} \right)^{1 / p}.
\end{align*}
In view of this fact, we have the following characterization for~$W^{s, p}(U)$.

\begin{proposition} \label{sobaltnormprop}
Let~$1 \le p < +\infty$ and~$s > 0$. Let~$k, l \in \Z$ be such that~$0 \le k < s$ and~$l > s - k$. Then,
\begin{equation} \label{sobaltnorm}
\| u \|_{L^p(U)} + \sum_{|\alpha| = k} \left( \int_{\R^n} \left( |z|^{k - s} \| \Delta_z^l D^\alpha u \|_{L^p(U_{l z})} \right)^p \frac{dz}{|z|^n} \right)^{1 / p},
\end{equation}
is a Banach space norm for~$W^{s, p}(U)$, equivalent to~$\| \cdot \|_{W^{s, p}(U)}$.
\end{proposition}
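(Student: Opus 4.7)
My approach is to prove the claimed norm equivalence in two conceptual steps. I would first reduce from the bounded domain~$U$ to all of~$\R^n$ via a bounded linear extension operator~$E : W^{s, p}(U) \to W^{s, p}(\R^n)$, whose existence follows from the~$C^\infty$-regularity of~$\partial U$ (see Triebel~\cite{T83, T92, T06}). Once such an operator is available, the estimate of the expression in~\eqref{sobaltnorm} by~$\| u \|_{W^{s, p}(U)}$ reduces to the analogous bound on~$\R^n$ applied to~$Eu$, while the reverse inequality reduces to the corresponding one on~$\R^n$ by restriction together with~$U_{l z} \subset U$. The task thus becomes to show that, for~$v \in W^{s, p}(\R^n)$,
$$
\| v \|_{L^p(\R^n)} + \sum_{|\alpha| = k} \left( \int_{\R^n} \left( |z|^{k - s} \| \Delta_z^l D^\alpha v \|_{L^p(\R^n)} \right)^p \frac{dz}{|z|^n} \right)^{1/p}
$$
is equivalent to~$\| v \|_{W^{s, p}(\R^n)}$ for any admissible pair~$(k, l)$.

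For the equivalence on~$\R^n$ I would establish two independent equivalences and then combine them. First, with~$k$ fixed, any two values~$l_1, l_2 > s - k$ give comparable seminorms. One direction is immediate from~$\Delta_z^{l + 1} = \Delta_z \Delta_z^l$, which yields~$\| \Delta_z^{l + 1} v \|_{L^p} \le 2 \| \Delta_z^l v \|_{L^p}$. The reverse direction is the classical Marchaud inequality, which for~$s' = s - k < l$ allows one to recover~$\| \Delta_z^l v \|_{L^p}$ from an integral average of~$\| \Delta_h^{l + 1} v \|_{L^p}$ over~$h$ with~$|h| \ge |z|$, in such a way that the weight~$|z|^{(k - s) p}$ in the integral above produces a convergent contribution. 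Second, to vary~$k$ one passes from~$(k, l)$ to~$(k - 1, l + 1)$ by exploiting, for smooth~$v$, the identity~$\Delta_z v(x) = \int_0^1 \nabla v(x + t z) \cdot z \, dt$, which converts one order of derivative into one order of difference at the cost of a factor~$|z|$, in precise agreement with the change of the weight from~$|z|^{k - s}$ to~$|z|^{(k - 1) - s}$. Chaining these equivalences together reduces any admissible pair~$(k, l)$ to the canonical pair~$(\lfloor s \rfloor, 1)$, for which the change-of-variables computation displayed just above the proposition's statement shows that the seminorm coincides with~$\sum_{|\alpha| = \lfloor s \rfloor} [D^\alpha v]_{W^{\sigma, p}(\R^n)}$, where~$\sigma = s - \lfloor s \rfloor$.

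\textbf{Main obstacle.} The principal technical step is the Marchaud inequality above. While classical on~$\R^n$, its proof involves a dyadic decomposition in~$|z|$ and a careful interchange of the orders of integration, with constants that must be tracked so as to remain independent of~$v$. A secondary delicate point is the recovery of intermediate derivatives~$D^\beta v \in L^p(\R^n)$ for~$0 < |\beta| < \lfloor s \rfloor$ from the weighted difference control alone, since the expression in~\eqref{sobaltnorm} contains no such term explicitly; this is achieved through a Besov-type interpolation inequality, ultimately relying on the identification of~$W^{s, p}(\R^n)$ with the Besov space~$B^{s, p}_p(\R^n)$ for~$s \notin \N$. Consistently with the footnote attached to the proposition, for these more delicate ingredients I would refer to Triebel's monographs rather than reproducing their proofs.
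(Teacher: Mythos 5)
You should first know that the paper does not actually prove Proposition~\ref{sobaltnormprop}: it is quoted from Theorem~4.4.2.1 of~\cite{T95} with a bare citation. Your outline of the whole-space case is the standard route (Marchaud's inequality to lower the order~$l$ of the difference, the identity~$\Delta_z v(x) = \int_0^1 \nabla v(x + t z) \cdot z \, dt$ to trade one derivative for one difference and one power of~$|z|$, and interpolation to recover the intermediate derivatives), and deferring Marchaud and the Besov identification to Triebel is no worse than what the paper itself does.

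There is, however, a genuine gap in your reduction from~$U$ to~$\R^n$. The direction bounding the quantity in~\eqref{sobaltnorm} by~$\| u \|_{W^{s, p}(U)}$ does follow from an extension operator, as you say, since~$\| \Delta_z^l D^\alpha u \|_{L^p(U_{l z})} \le \| \Delta_z^l D^\alpha E u \|_{L^p(\R^n)}$. But the converse direction, bounding~$\| u \|_{W^{s, p}(U)}$ by~\eqref{sobaltnorm}, does \emph{not} ``reduce to the corresponding one on~$\R^n$ by restriction.'' The quantity~\eqref{sobaltnorm} is intrinsic to~$U$ (increments are only measured on~$U_{l z}$), so to invoke the~$\R^n$ result you would need an extension~$E u$ whose difference-quotient seminorm on~$\R^n$ is controlled by the intrinsic one of~$u$ on~$U$ --- that is, an extension operator bounded with respect to the norm~\eqref{sobaltnorm} itself, which is essentially the content of the theorem and cannot be presupposed; the restriction inequality above points the wrong way for this direction. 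This is precisely where the geometry of~$\partial U$ enters (compare the paper's footnote lamenting the absence of references for Lipschitz sets), and it is the reason the hypothesis~$\partial U \in C^\infty$ is imposed. To close the gap you would have to either run the Marchaud and derivative-recovery arguments intrinsically on~$U$, using a cone condition to guarantee that sufficiently many increments~$z$ keep~$x, x + z, \ldots, x + l z$ inside~$U$, or construct an extension operator that is bounded from the difference-quotient norm on~$U$ to the one on~$\R^n$; either alternative is the real work, and neither is supplied by your sketch.
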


A reference for these equivalences is given by Theorem~4.4.2.1 at page~323 of~\cite{T95}. Note that the result is valid even if~$s$ is an integer.

\begin{remark} \label{sobaltnormrmk}
In what follows, we will be mostly interested in norms with~$k = 0$ and therefore~$l > s$. In such cases, we stress that~\eqref{sobaltnorm} may be replaced with the \emph{restricted} norm
\begin{equation} \label{sobresnorm}
\| u \|_{L^p(U)} + \left( \int_{B_\delta} \left( |z|^{- s} \| \Delta_z^l u \|_{L^p(U_{l z})} \right)^p \frac{dz}{|z|^n} \right)^{1 / p},
\end{equation}
for any~$\delta > 0$, with no modifications to the space~$W^{s, p}(U)$. Indeed, we have
$$
\| \Delta_z^l u \|_{L^p(U_{l z})} \le 2^l \| u \|_{L^p(U)},
$$
so that
$$
\left( \int_{\R^n \setminus B_\delta} \left( |z|^{- s} \| \Delta_z^l u \|_{L^p(U_{l z})} \right)^p \frac{dz}{|z|^n} \right)^{1 / p} \le 2^l \left( \frac{\Haus^{n - 1}(\partial B_1)}{s p} \right)^{1 / p} \delta^{- s} \| u \|_{L^p(U)}.
$$
Consequently, the norms defined by~\eqref{sobaltnorm} and~\eqref{sobresnorm} are equivalent.
%
\end{remark}

The second class of fractional spaces which we are interested in are the Nikol'skii spaces. For~$s = k + \sigma > 0$, with~$k \in \N \cup \{ 0 \}, \sigma \in (0, 1]$, and~$1 \le p < +\infty$, define
$$
N^{s, p}(U) := \left\{ u \in W^{k, p}(U) : [D^\alpha u]_{N^{\sigma, p}(U)} < +\infty \mbox{ for any } |\alpha| = k \right\},
$$
where, for~$v \in L^p(U)$,
$$
[v]_{N^{\sigma, p}(U)} := \sup_{z \in \R^n \setminus \{ 0 \}} |z|^{- \sigma} \| \Delta_z^2 v \|_{L^p(U_{2 z})}.
$$
It can be showed that~$N^{s, p}(U)$ is a Banach space with respect to the norm
$$
\| u \|_{N^{s, p}(U)} := \| u \|_{W^{k, p}(U)} + [u]_{N^{s, p}(U)}.
$$

Notice that this definition of Nikol'skii space may seem to differ from that given in Section~\ref{defsec}. In fact, this is not the case, as~$N^{s, p}(U)$ can be equivalently endowed with any norm of the form
\begin{equation} \label{nikoaltnorm}
\| u \|_{L^p(U)} + \sum_{|\alpha| = k} \sup_{z \in \R^n \setminus \{ 0 \}} |z|^{k - s} \| \Delta_z^l D^\alpha u \|_{L^p(U_{l z})},
\end{equation}
where~$k, l \in \Z$ are such that~$0 \le k < s$ and~$l > s - k$ (see again Theorem~4.4.2.1 of~\cite{T95}).

\begin{remark} \label{nikaltnormrmk}
As for the Sobolev spaces, we will consider norms with~$k = 0$ for the most of the time. We stress that in such cases~\eqref{nikoaltnorm} may be replaced with
$$
\| u \|_{L^p(U)} + \sup_{0 < |z| < \delta} |z|^{- s} \| \Delta_z^l u \|_{L^p(U_{l z})},
$$
for any integer~$l > s$ and any~$\delta > 0$.
\end{remark}

\smallskip

In the conclusive part of this section we study the mutual inclusion properties of~$W^{s, p}(U)$ and~$N^{s, p}(U)$. In order to do this, it will be useful to consider another family of equivalent norms. To this aim, for~$l \in \N$ we introduce the so-called~$l$-th \emph{modulus of smoothness} of~$u$
$$
\omega_p^l(u; \eta) := \sup_{0 < |z| < \eta} \| \Delta_z^l u \|_{L^p(U_{l z})},
$$
defined for any~$\eta > 0$. Then, we have

\begin{proposition} \label{2altnormprop}
Let~$s > 0$ and~$1 \le p < +\infty$. Let~$l > s$ be an integer and~$0 < \delta \le +\infty$. Then,
$$
\| u \|_{L^p(U)} + \left( \int_0^\delta \left( \eta^{- s} \omega_p^l(u; \eta) \right)^p \, \frac{d\eta}{\eta} \right)^{1 / p},
$$
is a Banach space norm for~$W^{s, p}(U)$, equivalent to~$\| \cdot \|_{W^{s, p}(U)}$.

The same statement holds true for the norms
$$
\| u \|_{L^p(U)} + \sup_{0 < \eta < \delta} \eta^{- s} \omega_p^l(u; \eta),
$$
and the space~$N^{s, p}(U)$.
\end{proposition}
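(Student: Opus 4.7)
The plan is to deduce this from the equivalent norms established earlier in the section: Proposition \ref{sobaltnormprop} together with Remark \ref{sobaltnormrmk} for the Sobolev part, and the norm \eqref{nikoaltnorm} together with Remark \ref{nikaltnormrmk} for the Nikol'skii part. In both cases the task reduces, after passing to polar coordinates or to the analogous supremum, to comparing the modulus of smoothness $\omega_p^l(u; \eta)$ with the quantity $\|\Delta_z^l u\|_{L^p(U_{lz})}$ weighted by powers of $|z|$.

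I would handle the Nikol'skii (supremum) assertion first, since it reduces to a direct calculation. Taking $k = 0$ in \eqref{nikoaltnorm} and restricting the supremum to $0 < |z| < \delta$ via Remark \ref{nikaltnormrmk}, one inequality follows from $\|\Delta_z^l u\|_{L^p(U_{lz})} \le \omega_p^l(u; 2|z|)$ and the substitution $\eta = 2|z|$. For the converse, since $\omega_p^l(u; \eta)$ is a supremum, for each $\eta$ and each $\varepsilon > 0$ there exists $z$ with $|z| < \eta$ such that $\|\Delta_z^l u\|_{L^p(U_{lz})} > \omega_p^l(u; \eta) - \varepsilon$; using $|z|^{-s} \ge \eta^{-s}$ one concludes
$$\eta^{-s} \omega_p^l(u; \eta) \le |z|^{-s} \|\Delta_z^l u\|_{L^p(U_{lz})} + \varepsilon \eta^{-s},$$
and taking the sup over $\eta \in (0, \delta)$ and sending $\varepsilon \to 0$ finishes this part.

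For the Sobolev (integral) assertion, Proposition \ref{sobaltnormprop} with $k = 0$ and Remark \ref{sobaltnormrmk} reduce the problem to showing the equivalence of
$$\left( \int_{B_\delta} |z|^{-sp-n} \|\Delta_z^l u\|_{L^p(U_{lz})}^p \, dz \right)^{1/p} \quad \text{and} \quad \left( \int_0^\delta \eta^{-sp-1} \omega_p^l(u; \eta)^p \, d\eta \right)^{1/p}.$$
Passing to polar coordinates on the left, the pointwise bound $\|\Delta_{r\omega}^l u\|_{L^p(U_{lr\omega})} \le \omega_p^l(u; r)$, valid for every $\omega \in \partial B_1$, yields one of the two inequalities immediately, up to the geometric factor $\Haus^{n-1}(\partial B_1)^{1/p}$ (and a harmless boundary contribution of size $\|u\|_{L^p(U)}$ when $\delta < +\infty$).

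The main obstacle is the opposite comparison, since $\omega_p^l(u; \eta)$ is a supremum and cannot be controlled pointwise by any spherical average of $\|\Delta_z^l u\|^p$. The strategy is to exploit the monotonicity of $\eta \mapsto \omega_p^l(u; \eta)$ through a dyadic decomposition of $(0, \delta)$, which bounds the integral by a series of the form $\sum_{k \ge 0} (2^{-k} \delta)^{-sp} \omega_p^l(u; 2^{-k} \delta)^p$. Each term can in turn be dominated, up to a geometrically summable remainder, by integrals of $\|\Delta_z^l u\|^p$ on the dyadic annuli $B_{2^{-k+1} \delta} \setminus B_{2^{-k-1} \delta}$, following the standard argument in the theory of Besov spaces (see Theorem~4.4.2.1 of~\cite{T95}). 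Summation in $k$ concludes the bound, and the Banach space property of the new norms is automatic from their equivalence with the already-known norms of $W^{s, p}(U)$ and $N^{s, p}(U)$.
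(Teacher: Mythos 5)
Your proposal is correct in outline and, for two of its three pieces, coincides with the paper's argument: the ``easy'' inequality for the Sobolev norm (bounding the $z$-integral of $\|\Delta_z^l u\|_{L^p(U_{lz})}$ by the $\eta$-integral of the modulus of smoothness) is obtained exactly as in the paper, via the pointwise bound $\|\Delta_z^l u\|_{L^p(U_{lz})}\le\omega_p^l(u;|z|)$ and polar coordinates; and your elementary two-sided comparison of $\sup_z|z|^{-s}\|\Delta_z^l u\|_{L^p(U_{lz})}$ with $\sup_\eta\eta^{-s}\omega_p^l(u;\eta)$ correctly settles the Nikol'skii case, which the paper merely declares ``analogous and easier.'' Where you genuinely diverge is the hard direction for the Sobolev norm. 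The paper introduces the ball means of differences $M_\eta^l u(x)=\eta^{-n}\int_{V^l(x,\eta)}|\Delta_z^l u(x)|\,dz$, quotes Theorem~1.118 of~\cite{T06} for the bound of the modulus-of-smoothness seminorm by the corresponding means-of-differences norm, and then reduces the latter to the $z$-integral norm by Minkowski's and Jensen's inequalities plus polar coordinates. You instead discretize $(0,\delta)$ dyadically and appeal to ``the standard argument'' to dominate each $\omega_p^l(u;2^{-k}\delta)^p$ by annular integrals of $\|\Delta_z^l u\|_{L^p(U_{lz})}^p$. Be aware that this black-boxed step --- controlling a \emph{supremum} of $l$-th differences by their \emph{average} over a ball or annulus --- is precisely the analytic substance that the paper's citation of~\cite{T06} supplies; on a domain it rests on a Johnen--Scherer/Marchaud-type identity for higher-order differences and requires care with the sets $U_{lz}$, so it cannot be treated as routine. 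Moreover your reference is misplaced: Theorem~4.4.2.1 of~\cite{T95} is what the paper uses for the differences-norm characterization in Proposition~\ref{sobaltnormprop}, not for the modulus-of-smoothness estimate. Granting that inequality with a correct citation, your dyadic summation does close the argument (the overlapping balls contribute a weight bounded by a multiple of $|w|^{-sp-n}$, recovering the $z$-integral norm), and the Banach-space assertion indeed follows from the norm equivalence; so your route trades the paper's Minkowski--Jensen reduction for a dyadic bookkeeping, with the same external input at its core.
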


\begin{proof}
We only deal with the Sobolev space case, the Nikol'skii one being completely analogous and easier. Furthermore, we assume~$\delta = 1$. Then, an argument similar to that presented in Remark~\ref{sobaltnormrmk} shows that the result can be extended to any~$\delta$.

For~$u \in L^p(U)$ let
$$
[u]_{W^{s, p}(U)}^\flat := \left( \int_{B_1} \left( |z|^{- s} \| \Delta_z^l u \|_{L^p(U_{l z})} \right)^p \, \frac{dz}{|z|^n} \right)^{1 / p},
$$
and
$$
[u]_{W^{s, p}(U)}^\sharp := \left( \int_0^1 \left( \eta^{- s} \omega_p^l(u; \eta) \right)^p \, \frac{d\eta}{\eta} \right)^{1 / p}.
$$
We claim that there exists a constant~$c \ge 1$ such that
\begin{equation} \label{Wnormeq}
c^{- 1} [u]_{W^{s, p}(U)}^\flat \le [u]_{W^{s, p}(U)}^\sharp \le c \left( \| u \|_{L^p(U)} + [u]_{W^{s, p}(U)}^\flat \right),
\end{equation}
for all~$u \in L^p(U)$. In view of Proposition~\ref{sobaltnormprop} and Remark~\ref{sobaltnormrmk}, this concludes the proof.

To check the left hand inequality of~\eqref{Wnormeq} we first observe that
$$
\| \Delta_z^l u \|_{L^p(U_{l z})} \le \sup_{0 < |y| < |z|} \| \Delta_y^l u \|_{L^p(U_{l y})} = \omega_p^l(u; |z|),
$$
for any~$z \in \R^n$. Then, using polar coordinates,
\begin{align*}
[u]_{W^{s, p}(U)}^\flat & = \left( \int_{B_1} \left( |z|^{- s} \| \Delta_z^l u \|_{L^p(U_{l z})} \right)^p \, \frac{dz}{|z|^n} \right)^{1 / p} \\
& \le \left( \Haus^{n - 1}(\partial B_1) \int_0^1 \left( \eta^{- s} \omega_p^l(u; \eta) \right)^p \, \frac{d\eta}{\eta} \right)^{1 / p} \\
& = \Haus^{n - 1}(\partial B_1)^{1 / p} \, [u]_{W^{s, p}(U)}^\sharp.
\end{align*}

Now we focus on the second inequality. In order to show its validity we need the following auxiliary result. For~$x \in U$,~$\eta > 0$ and~$u \in L^p(U)$, let
\begin{align*}
V^l(x, \eta) & := \left\{ z \in B_\eta : x + \tau z \in U, \, \mbox{for any } 0 \le \tau \le l \right\}, \\
M_\eta^l u (x) & := \eta^{- n} \int_{V^l(x, \eta)} |\Delta_z^l u(x)|\, dz,
\end{align*}
and define
\begin{align} \label{starsemi}
[u]_{W^{s, p}(U)}^* & := 
\left( \int_0^1 \left( \eta^{- s} \| M_\eta^l u \|_{L^p(U)} \right)^p \, \frac{d\eta}{\eta} \right)^{1 / p}, \\ \nonumber
\| u \|_{W^{s, p}(U)}^* & := \| u \|_{L^p(U)} + [u]_{W^{s, p}(U)}^*.
\end{align}
Then, by virtue of~\cite[Theorem~1.118]{T06} we infer that
\begin{equation} \label{triebelIII}
[u]_{W^{s, p}(U)}^\sharp \le c \| u \|_{W^{s, p}(U)}^*,
\end{equation}
for any~$u \in L^p(U)$.

Applying the generalized Minkowski's inequality to the right-hand side of~\eqref{starsemi} and observing that
$$
\{ (x, z) \in U \times \R^n : z \in V^l(x, \eta)\} \subseteq \left\{ (x, z) \in U \times B_\eta : x \in U_{l z} \right\},
$$
we get
\begin{equation} \label{tech}
\begin{aligned}
[u]_{W^{s, p}(U)}^* & = \left( \int_0^1 \eta^{- (s + n) p} \left( \int_U \left( \int_{V^l(x, \eta)} |\Delta_z^l u(x)| \, dz \right)^p dx \right) \frac{d\eta}{\eta} \right)^{1 / p} \\
& \le \left( \int_0^1 \eta^{- (s + n) p} \left( \int_{B_\eta} \| \Delta_z^l u \|_{L^p(U_{l z})} \, dz \right)^p \frac{d\eta}{\eta} \right)^{1 / p}.
\end{aligned}
\end{equation}
Now, Jensen's inequality implies that
$$
\left( \int_{B_\eta} \| \Delta_z^l u \|_{L^p(U_{l z})} \, dz \right)^p \le c \, \eta^{n (p - 1)} \int_{B_\eta} \| \Delta_z^l u \|_{L^p(U_{l z})}^p \, dz,
$$
and hence~\eqref{tech} becomes
$$
[u]_{W^{s, p}(U)}^* \le c \left( \int_0^1 \eta^{- n - 1 - s p} \left( \int_{B_\eta} \| \Delta_z^l u \|_{L^p(U_{l z})}^p \, dz \right) d\eta \right)^{1 / p}.
$$
We finally switch to polar coordinates to compute
\begin{align*}
[u]_{W^{s, p}(U)}^* & \le c \left( \int_0^1 \int_0^\eta \eta^{- n - 1 - s p} \left( \int_{\partial B_\rho} \| \Delta_z^l u \|_{L^p(U_{l z})}^p \, d\Haus^{n - 1}(z) \right) d\rho \, d\eta \right)^{1 / p} \\
& = c \left( \int_0^1 \left( \int_{\partial B_\rho} \| \Delta_z^l u \|_{L^p(U_{l z})}^p \, d\Haus^{n - 1}(z) \right) \left( \int_\rho^1 \eta^{- n - 1 - s p} \, d\eta \right) d\rho \right)^{1 / p} \\
& \le c \left( \int_0^1 \left( \int_{\partial B_\rho} \| \Delta_z^l u \|_{L^p(U_{l z})}^p \, d\Haus^{n - 1}(z) \right) \rho^{- n - s p} \, d\rho \right)^{1 / p} \\
& = c [u]_{W^{s, p}(U)}^\flat.
\end{align*}
By combining this formula with~\eqref{triebelIII}, we obtain the right inequality of~\eqref{Wnormeq}. Thus, the proof of the proposition is complete.
\end{proof}

\medskip

We are now in position to prove the main results of this section, concerning the relation between Sobolev and Nikol'skii spaces. First, we have

\begin{proposition} \label{sobnik1}
Let~$s > 0$ and~$1 \le p < +\infty$. Then,
$$
W^{s, p}(U) \subseteq N^{s, p}(U),
$$
and there exists a constant~$C > 0$, depending on~$n$,~$s$ and~$p$, such that
$$
\| u \|_{N^{s, p}(U)} \le C \| u \|_{W^{s, p}(U)},
$$
for any~$u \in L^p(U)$.
\end{proposition}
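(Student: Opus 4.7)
The plan is to use the equivalent norms provided by Proposition~\ref{2altnormprop}. Fix an integer~$l > s$. By that proposition (applied once for each space, with~$\delta = 1$), there exist constants~$c_1, c_2 > 0$ such that
$$
c_1^{- 1} \| u \|_{W^{s, p}(U)} \le \| u \|_{L^p(U)} + \left( \int_0^1 \left( \eta^{- s} \omega_p^l(u; \eta) \right)^p \frac{d\eta}{\eta} \right)^{1 / p} \le c_1 \| u \|_{W^{s, p}(U)},
$$
and analogously
$$
c_2^{- 1} \| u \|_{N^{s, p}(U)} \le \| u \|_{L^p(U)} + \sup_{0 < \eta < 1} \eta^{- s} \omega_p^l(u; \eta) \le c_2 \| u \|_{N^{s, p}(U)}.
$$
Hence the inclusion and the norm estimate reduce to bounding the sup norm by the integral norm, and this is where monotonicity of the modulus of smoothness plays the key role.

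The crucial observation is that, by its very definition, $\eta \mapsto \omega_p^l(u; \eta)$ is non-decreasing in~$\eta$. Thus for any~$\eta_0 \in (0, 1/2]$ and any~$\eta \in [\eta_0, 2 \eta_0]$ we have~$\omega_p^l(u; \eta) \ge \omega_p^l(u; \eta_0)$ and~$\eta^{- s} \ge (2 \eta_0)^{- s}$, so that
$$
\int_0^1 \left( \eta^{- s} \omega_p^l(u; \eta) \right)^p \frac{d\eta}{\eta} \ge \int_{\eta_0}^{2 \eta_0} \left( \eta^{- s} \omega_p^l(u; \eta) \right)^p \frac{d\eta}{\eta} \ge (2 \eta_0)^{- s p} \omega_p^l(u; \eta_0)^p \ln 2.
$$
Rearranging,
$$
\eta_0^{- s} \omega_p^l(u; \eta_0) \le \frac{2^s}{(\ln 2)^{1/p}} \left( \int_0^1 \left( \eta^{- s} \omega_p^l(u; \eta) \right)^p \frac{d\eta}{\eta} \right)^{1 / p}.
$$

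For~$\eta_0 \in (1/2, 1)$, instead, the trivial bound~$\omega_p^l(u; \eta_0) \le 2^l \| u \|_{L^p(U)}$ yields~$\eta_0^{- s} \omega_p^l(u; \eta_0) \le 2^{l + s} \| u \|_{L^p(U)}$. Combining the two estimates, taking the supremum over~$\eta_0 \in (0, 1)$ and adding~$\| u \|_{L^p(U)}$ on both sides produces
$$
\| u \|_{L^p(U)} + \sup_{0 < \eta < 1} \eta^{- s} \omega_p^l(u; \eta) \le C \left( \| u \|_{L^p(U)} + \left( \int_0^1 \left( \eta^{- s} \omega_p^l(u; \eta) \right)^p \frac{d\eta}{\eta} \right)^{1 / p} \right),
$$
for some~$C = C(n, s, p, l) > 0$. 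Feeding this into the equivalences recalled above yields the claim. There is no real obstacle here; the only thing to watch is to invoke the two equivalent-norm statements of Proposition~\ref{2altnormprop} with the \emph{same} integer~$l > s$, so that both norms are expressed through the same modulus of smoothness and can be compared directly.
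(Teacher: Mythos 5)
Your proof is correct and follows essentially the same route as the paper: both reduce the claim, via the equivalent norms of Proposition~\ref{2altnormprop}, to bounding the sup-type seminorm by the integral-type one through the monotonicity of $\omega_p^l(u;\cdot)$. The paper merely takes $\delta=+\infty$ and integrates over the tail $(t,+\infty)$, which yields the constant $(sp)^{-1/p}$ directly and avoids your case split at $\eta_0=1/2$; this is a cosmetic difference only.
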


\begin{proof}
In view of Proposition~\ref{2altnormprop} it is enough to prove that, if~$l \in \Z$ is such that~$l > s$, then
\begin{equation} \label{tech2}
\sup_{\eta > 0} \eta^{- s} \omega_p^l(u; \eta) \le c \left( \int_0^{+\infty} \left( \eta^{- s} \omega_p^l(u; \eta) \right)^p \frac{d\eta}{\eta} \right)^{1/ p},
\end{equation}
for some~$c > 0$. But this is in turn an immediate consequence of the monotonicity of~$\omega_p^l(u; \cdot)$. Indeed,~$\omega_p^l(u; \eta) \ge \omega_p^l(u; t)$, for any~$\eta \ge t$, and so
\begin{align*}
\left( \int_0^{+\infty} \left( \eta^{- s} \omega_p^l(u; \eta) \right)^p \frac{d\eta}{\eta} \right)^{1/ p} & \ge \left( \int_t^{+\infty} \left( \eta^{- s} \omega_p^l(u; t) \right)^p \frac{d\eta}{\eta} \right)^{1/ p} = (s p)^{- 1 / p} t^{- s} \omega_p^l(u; t).
\end{align*}
Inequality~\eqref{tech2} is then obtained by taking the supremum as~$t > 0$ on the right hand side.
\end{proof}

The following provides a partial converse to the above inclusion.

\begin{proposition} \label{sobnik2}
Let~$s > r > 0$ and~$1 \le p < +\infty$. Then,
$$
N^{s, p}(U) \subseteq W^{r, p}(U),
$$
and there exists a constant~$C > 0$, depending on~$n$,~$r$,~$s$ and~$p$, such that
$$
\| u \|_{W^{r, p}(U)} \le C \| u \|_{N^{s, p}(U)},
$$
for any~$u \in L^p(U)$.
\end{proposition}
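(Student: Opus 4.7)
The plan is to deduce this from Proposition~\ref{2altnormprop}, which expresses both norms in terms of the modulus of smoothness $\omega_p^l(u;\eta)$. The exponent gap $s - r > 0$ will provide integrability at the origin.

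First I would fix an integer $l > s$, so in particular $l > r$ as well. Using the Nikol'skii version of Proposition~\ref{2altnormprop} with $\delta = 1$, there is a constant $c_1 > 0$ such that
\[
\sup_{0 < \eta < 1} \eta^{-s} \omega_p^l(u; \eta) \le c_1 \| u \|_{N^{s, p}(U)}
\]
for every $u \in N^{s, p}(U)$. Equivalently,
\[
\omega_p^l(u; \eta) \le c_1 \, \eta^s \, \| u \|_{N^{s, p}(U)} \qquad \mbox{for all } \eta \in (0, 1).
\]

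Next I would estimate the Sobolev seminorm given by Proposition~\ref{2altnormprop} (with $\delta = 1$ and the same $l$) applied at exponent $r$:
\[
\int_0^1 \left( \eta^{-r} \omega_p^l(u; \eta) \right)^p \frac{d\eta}{\eta} \le c_1^p \| u \|_{N^{s, p}(U)}^p \int_0^1 \eta^{(s - r) p} \, \frac{d\eta}{\eta} = \frac{c_1^p}{(s - r) p} \, \| u \|_{N^{s, p}(U)}^p,
\]
where the integral on the right is finite precisely because $s - r > 0$. Combining this bound with the trivial estimate $\| u \|_{L^p(U)} \le \| u \|_{N^{s, p}(U)}$ and invoking the equivalence of norms from Proposition~\ref{2altnormprop} on the Sobolev side, I obtain $u \in W^{r, p}(U)$ together with the desired inequality
\[
\| u \|_{W^{r, p}(U)} \le C \| u \|_{N^{s, p}(U)},
\]
for a constant $C > 0$ depending only on $n$, $r$, $s$ and $p$ (through $c_1$ and the constants in Proposition~\ref{2altnormprop}).

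There is no real obstacle in this argument: the key observation is that Proposition~\ref{2altnormprop} reduces both memberships to quantitative bounds on the same function $\eta \mapsto \omega_p^l(u; \eta)$, and once we know $\omega_p^l(u; \eta) = O(\eta^s)$ near the origin, integrability of $\eta^{-r} \omega_p^l(u;\eta)$ against $d\eta/\eta$ follows by a direct computation, with the only constraint being $s > r$. The restriction to the integral over $(0,1)$ rather than $(0, +\infty)$ is harmless in view of Proposition~\ref{2altnormprop}, and is in fact what lets us avoid any issue for $\eta$ large (where only the trivial bound $\omega_p^l(u;\eta) \le 2^l \|u\|_{L^p(U)}$ is available).
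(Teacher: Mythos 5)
Your proposal is correct and coincides with the paper's own argument: both deduce the inclusion from Proposition~\ref{2altnormprop} by writing $\eta^{-r}\omega_p^l(u;\eta) = \eta^{s-r}\cdot\eta^{-s}\omega_p^l(u;\eta)$, bounding the second factor by the Nikol'skii seminorm, and integrating $\eta^{(s-r)p-1}$ over $(0,1)$, which converges precisely because $s>r$. The only cosmetic difference is that the paper pulls out the supremum inside the integral in a single displayed computation, yielding the same constant $[(s-r)p]^{-1/p}$.
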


\begin{proof}
The result follows by noticing that, for~$l \in \Z$ with~$l > s$,
\begin{align*}
\left( \int_0^1 \left( \eta^{- r} \omega_p^l(u; \eta) \right)^p \frac{d\eta}{\eta} \right)^{1 / p} & = \left( \int_0^1 \eta^{(s - r)p } \left( \eta^{- s} \omega_p^l(u; \eta) \right)^p \frac{d\eta}{\eta} \right)^{1 / p} \\
& \le [(s - r) p]^{- 1 / p} \sup_{0 < \eta < 1} \eta^{- s} \omega_p^l(u; \eta),
\end{align*}
for any~$u \in L^p(U)$, and recalling Proposition~\ref{2altnormprop}.
\end{proof}

\section{Some auxiliary results} \label{auxsec}

Before we can proceed to Sections~\ref{caccsec} and~\ref{mainsec}, which contain the core argumentations leading to Theorem~\ref{nikmainthm}, we need to prove a couple of subsidiary result. 

First, we prove the following discrete version of the standard integration by parts formula.

\begin{lemma} \label{discintpartlem}
Let~$B_R$ be some ball of radius~$R > 0$ in~$\R^n$. Assume that~$K$ satisfies assumptions~\eqref{Ksym} and~\eqref{Kbounds}. Let~$u, v \in H^s(B_{8 R})$, with~$v$ supported in~$B_{2 R}$. Then,
\begin{equation} \label{discintpart}
\begin{aligned}
& \int_{B_{8 R}} \int_{B_{8 R}} \left( u(x) - u(y) \right) \left(\Delta_{-z}^2 v(x) - \Delta_{-z}^2 v(y) \right) K(x, y) \, dx dy \\
& \hspace{20pt} = \int_{B_{6 R}} \int_{B_{6 R}}  \left( \Delta_z^2 u(x) - \Delta_z^2 u(y) \right) \left( v(x) - v(y) \right) K(x, y) \, dx dy \\
& \hspace{20pt} \quad + \sum_{i = 1}^2 (-1)^i \binom{2}{i} \int_{B_{6 R}} \int_{B_{6 R}} \left( \tau_{i z} u(x) - \tau_{i z} u(y) \right) \left(v(x) - v(y) \right) \tensor*[]{\Delta}{_{i z}} K(x, y) \, dx dy \\
& \hspace{20pt} \quad - 2 \sum_{i = 0}^2 (-1)^i \binom{2}{i} \int_{B_{8 R}} \int_{B_{8 R}} \left( u(x) - u(y) \right) \tau_{- i z} \chi_{\R^n \setminus B_{6 R}}(x) \tau_{- i z} v(y) \\
& \hspace{300pt} \times K(x, y) \, dx dy,
\end{aligned}
\end{equation}
for any~$z \in \R^n$ such that~$|z| < R$.
\end{lemma}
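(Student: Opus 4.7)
The approach is the standard one for discrete integration by parts against a non-translation-invariant kernel: expand $\Delta_{-z}^2 v$ as a signed combination of translates of $v$, and for each summand use a change of variables to move the shift from $v$ onto $u$ and $K$. The algebraic identity $\sum_{i = 0}^{2} (-1)^i \binom{2}{i} \tau_{i z} u = \Delta_z^2 u$ is what lets the main term on the right-hand side reassemble cleanly; the non-translation-invariance of $K$ produces the kernel-difference correction (via $\tau_{i z} K = K + \Delta_{i z} K$, with the $i = 0$ term dropping out automatically), and the fact that the integrations are over bounded balls rather than all of $\R^n$ produces the boundary correction.

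Concretely, I would first write the left-hand side as $\sum_{i = 0}^{2} (-1)^i \binom{2}{i} I_i$ with
\[
I_i := \int_{B_{8 R}} \int_{B_{8 R}} (u(x) - u(y))(\tau_{-i z} v(x) - \tau_{-i z} v(y)) K(x, y)\, dx\, dy,
\]
and then substitute $(x, y) \mapsto (x - i z, y - i z)$ in each $I_i$. Setting $A_i := B_{8 R} - i z$, the new integrand becomes $(\tau_{i z} u(x') - \tau_{i z} u(y'))(v(x') - v(y')) \tau_{i z} K(x', y')$ and the new domain is $A_i \times A_i$. Since $|i z| < 2 R$, the nesting $B_{6 R} \subset A_i \subset B_{10 R}$ holds, so I can split $A_i^2 = B_{6 R}^2 \cup (A_i^2 \setminus B_{6 R}^2)$, writing $I_i = J_i + R_i$ with $J_i$ the ``main'' integral on $B_{6 R}^2$ and $R_i$ a remainder.

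To handle $R_i$ I would exploit that $v$ is supported in $B_{2 R} \subset B_{6 R}$: on the piece $(A_i \setminus B_{6 R})^2$ both $v$-factors vanish, so only the two ``mixed'' regions $B_{6 R} \times (A_i \setminus B_{6 R})$ and $(A_i \setminus B_{6 R}) \times B_{6 R}$ contribute, and these produce equal values by the symmetry $K(x, y) = K(y, x)$. Undoing the change of variables on each of them then rewrites them as $-\int_{B_{8 R}} \int_{B_{8 R}} (u(x) - u(y)) \chi_{\R^n \setminus B_{6 R}}(x - i z) v(y - i z) K(x, y)\, dx\, dy$, where the indicator encodes the constraint that $x'$ lies outside $B_{6 R}$. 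Hence $R_i$ equals $-2$ times the $i$-th summand of the boundary correction in~\eqref{discintpart}.

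Finally, for the main contribution $\sum_i (-1)^i \binom{2}{i} J_i$, I would split $\tau_{i z} K = K + \Delta_{i z} K$ inside: by linearity of the bilinear form and the identity $\sum_i (-1)^i \binom{2}{i} \tau_{i z} u = \Delta_z^2 u$, the $K$-portion reassembles into the main term of~\eqref{discintpart}, while the $\Delta_{i z} K$-portion yields the kernel-difference correction (with the $i = 0$ contribution automatically vanishing since $\Delta_0 K = 0$). The main bookkeeping obstacle will be tracking the shifted domains $A_i$ against the original $B_{8 R}^2$ and verifying that the mixed remainder collapses to exactly the indicator form stated in the lemma; the support of $v$, the symmetry of $K$, and the inclusion $B_{6 R} \subset A_i$ (valid precisely because $|z| < R$) are the geometric facts that make this identification work.
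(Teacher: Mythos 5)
Your proposal is correct and follows essentially the same route as the paper: binomial expansion of $\Delta_{-z}^2 v$, a change of variables $(x,y)\mapsto(x-iz,y-iz)$ to shift the increment onto $u$ and $K$, the splitting $\tau_{iz}K = K + \Delta_{iz}K$, and identification of the boundary remainder using the support of $v$ and the symmetry of $K$ (the paper merely splits the domain before changing variables rather than after, which is immaterial). The geometric facts you flag --- $B_{2R}+iz\subset B_{6R}+iz$ and the identity $\tau_{-iz}\chi_{\R^n\setminus B_{6R}}=\chi_{\R^n\setminus(B_{6R}+iz)}$ --- are exactly the ones the paper uses.
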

\begin{proof}
We first expand the integral on the left hand side of~\eqref{discintpart}, obtaining
\begin{equation} \label{discpartball1}
\begin{aligned}
& \int_{B_{8 R}} \int_{B_{8 R}} \left( u(x) - u(y) \right) \left( \Delta_{-z}^2 v(x) - \Delta_{-z}^2 v(y) \right) K(x, y) \, dx dy \\
& \hspace{10pt} = \sum_{i = 0}^2 (-1)^i \binom{2}{i} \int_{B_{8 R}} \int_{B_{8 R}} \left( u(x) - u(y) \right) \left(v(x - i z) - v(y - i z) \right) K(x, y) \, dx dy.
\end{aligned}
\end{equation}
Then, we write each term on the right hand side of~\eqref{discpartball1} as\footnote{The symbol~$D + z$, where~$D$ is a set and~$z$ a vector of~$\R^n$, identifies, as conventional, the set
$$
\left\{ y \in \R^n : y = x + z \mbox{ with } x \in D \right\}.
$$
In the following formulae it is applied with~$D$ an Euclidean ball~$B_r$. Also, it should not be confused with the notation~$(B_r)_z$, which will be used later on in Section~\ref{mainsec} and has to be understood in the sense of definition~\eqref{Ukz}.}
\begin{equation} \label{discpartball2}
\begin{aligned}
& \int_{B_{8 R}} \int_{B_{8 R}} \left( u(x) - u(y) \right) \left( v(x - i z) - v(y - i z) \right) K(x, y) \, dx dy \\
& \hspace{50pt} = \int_{B_{6 R} + i z} \int_{B_{6 R} + i z} \left( u(x) - u(y) \right) \left(v(x - i z) - v(y - i z) \right) K(x, y) \, dx dy \\
& \hspace{50pt} \quad - 2 \int_{B_{8 R}} \int_{B_{8 R}} \left( u(x) - u(y) \right) \chi_{\R^n \setminus (B_{6 R} + i z)}(x) v(y - i z) K(x, y) \, dx dy.
\end{aligned}
\end{equation}
We apply the change of variables~$\tilde{x} := x - i z$,~$\tilde{y} := y - i z$ in the first integral, to get
\begin{equation} \label{discpartball2a}
\begin{aligned}
& \int_{B_{6 R} + i z} \int_{B_{6 R} + i z} \left( u(x) - u(y) \right) \left(v(x - i z) - v(y - i z) \right) K(x, y) \, dx dy \\
& \hspace{20pt} = \int_{B_{6 R}} \int_{B_{6 R}} \left( u(\tilde{x} + i z) - u(\tilde{y} + i z) \right) \left(v(\tilde{x}) - v(\tilde{y}) \right) K(\tilde{x} + i z, \tilde{y} + i z) \, d\tilde{x} d\tilde{y}.
\end{aligned}
\end{equation}
Writing then for~$i = 1, 2$
$$
K(\tilde{x} + i z, \tilde{y} + i z) = K(\tilde{x}, \tilde{y}) + \tensor*[]{\Delta}{_{i z}} K(\tilde{x}, \tilde{y}),
$$
and relabeling the variables~$\tilde{x}, \tilde{y}$ as~$x, y$, formula~\eqref{discpartball2a} becomes
\begin{equation} \label{discpartball2b}
\begin{aligned}
& \int_{B_{6 R} + i z} \int_{B_{6 R} + i z} \left( u(x) - u(y) \right) \left(v(x - i z) - v(y - i z) \right) K(x, y) \, dx dy \\
& \hspace{50pt} = \int_{B_{6 R}} \int_{B_{6 R}} \left( u(x + i z) - u(y + i z) \right) \left(v(x) - v(y) \right) K(x, y) \, dx dy \\
& \hspace{50pt} \quad + \int_{B_{6 R}} \int_{B_{6 R}} \left( u(x + i z) - u(y + i z) \right) \left(v(x) - v(y) \right) \tensor*[]{\Delta}{_{i z}} K(x, y) \, dx dy.
\end{aligned}
\end{equation}
By using~\eqref{discpartball2},~\eqref{discpartball2b} in~\eqref{discpartball1} and noticing that~$\tau_{- i z} \chi_{\R^n \setminus B_{6 R}} = \chi_{\R^n \setminus (B_{6 R} + i z)}$, we finally obtain~\eqref{discintpart}.
\end{proof}

Then, we have the following result, in which we deduce an upper bound for the measure of the symmetric difference of two translated balls in terms of the modulus of the displacement vector. Despite the estimate is almost immediate, we include a proof of it for completeness.

We also refer to~\cite{S10} for a refined version of this result, holding for general bounded sets.

\begin{lemma} \label{defectlem}
Let~$B_R$ be some ball of radius~$R > 0$ in~$\R^n$. Then, for any~$z \in \R^n$,
$$
|B_R \Delta (B_R + z)| \le C R^{n - 1} |z|,
$$
where~$C > 0$ is a dimensional constant.
\end{lemma}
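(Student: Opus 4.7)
The plan is to bound each half of the symmetric difference separately and then exploit a trivial bound when $|z|$ is large.

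First I would handle the case $|z| < R$. If $x \in B_R \setminus (B_R + z)$, then $|x| < R$ while $|x - z| \geq R$. The segment from $x-z$ to $x$ has length $|z|$ and crosses the sphere $\partial B_R$, which forces $R - |x| \leq |z|$, i.e. $x$ lies in the annulus $B_R \setminus B_{R - |z|}$. Hence
\begin{equation*}
|B_R \setminus (B_R + z)| \;\leq\; |B_R \setminus B_{R - |z|}| \;=\; \omega_n \bigl( R^n - (R - |z|)^n \bigr),
\end{equation*}
and a mean value argument (or the elementary factorization $a^n - b^n = (a - b) \sum_{j=0}^{n-1} a^j b^{n-1-j}$ applied to $a = R$, $b = R - |z|$) gives $R^n - (R - |z|)^n \leq n R^{n-1} |z|$. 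By the translation-invariance of Lebesgue measure, the companion set $(B_R + z) \setminus B_R$ has exactly the same measure, so
\begin{equation*}
|B_R \Delta (B_R + z)| \;\leq\; 2 n \omega_n R^{n-1} |z|.
\end{equation*}

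Second, if $|z| \geq R$, the trivial estimate $|B_R \Delta (B_R + z)| \leq 2 |B_R| = 2 \omega_n R^n \leq 2 \omega_n R^{n-1} |z|$ handles that range, and combining the two cases yields the claim with $C = 2 n \omega_n$ (which depends only on $n$). There is no real obstacle here; the only care needed is to keep the bound valid \emph{uniformly} in $|z|$, which is why the two regimes must be treated separately.
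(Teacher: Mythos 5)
Your proof is correct and follows essentially the same route as the paper's: both reduce the estimate to the volume of the annulus $B_R \setminus B_{R-|z|}$ (the paper phrases the key inclusion as $B_{1-|\hat z|} \subset B_1 + \hat z$ after rescaling to the unit ball, which is exactly the contrapositive of your triangle-inequality step) and then invoke the elementary bound $R^n - (R-|z|)^n \le nR^{n-1}|z|$, treating large $|z|$ trivially. The only difference is cosmetic: the paper rescales to $B_1$ first, while you work directly at scale $R$.
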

\begin{proof}
First, we observe that we may restrict ourselves to~$|z| \le R / 2$, being the opposite case trivial. With the change of variables~$y := x / R$, we scale
$$
|B_R \Delta (B_R + z)| = 2 \int_{B_R \setminus (B_R + z)} dx = 2 R^n \int_{B_1 \setminus (B_1 + \hat{z})} dy,
$$
where~$\hat{z} = z / R$. Then, we easily check that
$$
B_{1 - |\hat{z}|} \subset B_1 + \hat{z},
$$
to obtain
$$
|B_R \Delta (B_R + z)| \le 2 R^n \int_{B_1 \setminus B_{1 - |\hat{z}|}} dy = \frac{2 \Haus^{n - 1}(\partial B_1)}{n} R^n \left[ 1 - (1 - |\hat{z}|)^n \right].
$$
The result then follows, since~$1 - (1 - t)^n \le n t$, for any~$t \ge 0$.
\end{proof}

\section{A Caccioppoli-type inequality} \label{caccsec}

In this section we present an estimate for the~$H^s$ norm of a solution~$u$ of~\eqref{maineq} reminiscent of the classical one by Caccioppoli. Results of this kind are by now well established also for non-local equations, for instance in~\cite{KMS15, DKP14, BP14}.

\begin{proposition} \label{caccioppoliprop}
Let~$s \in (0, 1)$,~$\beta > 0$ and~$\Omega \subset \R^n$ be an open set. Fix a point~$x_0 \in \Omega$ and let~$r > 0$ be such that~$B_r(x_0) \subset \subset \Omega$. Assume that~$K$ satisfies assumptions~\eqref{Ksym} and~\eqref{Kbounds}. Let~$u \in X(\Omega) \cap L^1_\beta(\R^n)$ be a solution of~\eqref{maineq}, with~$f \in L^2(\Omega)$. Then,
\begin{equation} \label{caccioppoli}
[ u ]_{H^s(B_r(x_0))} \le C \left( \| u \|_{L^2(\Omega)} + \| u \|_{L^1_{x_0, \beta}(\R^n)} + \| f \|_{L^2(\Omega)} \right),
\end{equation}
for some constant~$C > 0$ depending on~$n$,~$s$,~$\beta$,~$\lambda$,~$\Lambda$,~$M$,~$r$ and~$\dist \left( B_r(x_0), \partial \Omega \right)$.
\end{proposition}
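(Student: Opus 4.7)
\medskip

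The plan is to adapt the classical Caccioppoli strategy to the non-local energy $\E_K$. Pick $R > 0$ slightly larger than $r$ with $B_R(x_0) \subset\subset \Omega$, and choose $\eta \in C^\infty_0(B_R(x_0))$ satisfying $0 \le \eta \le 1$, $\eta \equiv 1$ on $B_r(x_0)$, and $\|\nabla \eta\|_\infty \le c/(R-r)$. I would verify that $\varphi := u \eta^2 \in X_0(\Omega)$ (using $u \in X(\Omega)$, the compact support of $\eta$, and the Lipschitz regularity of $\eta^2$), so that $\varphi$ is an admissible test function. Inserting it in~\eqref{weakform} gives
$$
\E_K(u, u \eta^2) = \int_\Omega f \, u \eta^2 \, dx.
$$

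The key algebraic identity is: for $a,b \in \R$ and $\alpha, \beta \ge 0$,
$$
(a-b)(a \alpha^2 - b \beta^2) = (a\alpha - b\beta)^2 - a b (\alpha - \beta)^2 \ge (a\alpha - b\beta)^2 - \tfrac{1}{2}(a^2 + b^2)(\alpha - \beta)^2.
$$
Applied pointwise with $a = u(x)$, $b = u(y)$, $\alpha = \eta(x)$, $\beta = \eta(y)$, this rewrites the integrand of $\E_K(u,u\eta^2)$ in a way that isolates a quadratic form in the localized function $u\eta$. I would then split the double integral over $\R^n \times \R^n$ into the \emph{close} region $B_R(x_0) \times B_R(x_0)$ and the \emph{tail} region $\C_{B_R(x_0)} \setminus \left( B_R(x_0) \times B_R(x_0) \right)$ (using symmetry to halve the latter).

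On the close region, the identity together with the lower bound~\eqref{Kbounds1} (and, if needed, a preliminary reduction to the case $R < 1/2$ via covering $B_r(x_0)$ by smaller balls) yields
$$
\iint_{B_R \times B_R} (u(x) - u(y))(u\eta^2(x) - u\eta^2(y)) K(x,y) \, dx dy \ge \lambda [u\eta]_{H^s(B_R(x_0))}^2 - \mathcal{R},
$$
where $\mathcal{R}$ is controlled by $\|u\|_{L^2(B_R)}^2$ because $|\eta(x)-\eta(y)|^2 \le c|x-y|^2/(R-r)^2$ and $|x-y|^{2-n-2s}$ is integrable near the diagonal for $s < 1$. Since $\eta \equiv 1$ on $B_r(x_0)$, the seminorm $[u]_{H^s(B_r(x_0))}^2$ is dominated by $[u\eta]_{H^s(B_R(x_0))}^2$. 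On the tail region, where $x \in \supp \eta$ and $y \in \R^n \setminus B_R(x_0)$, the contribution equals $2 \int_{\supp\eta} \int_{\R^n \setminus B_R} (u(x) - u(y)) u(x) \eta(x)^2 K(x,y) \, dy dx$; here $|x - y|$ is bounded below by a positive constant depending on $R-r$, so splitting $\{|x-y| < 1\}$ versus $\{|x-y| \ge 1\}$ and invoking~\eqref{Kbounds1}-\eqref{Kbounds2} shows that $K(x,y) \le c \, w_{x_0, \beta}(y)$ for such $(x,y)$ (up to absolute constants depending on $R, r$ and $\beta$). This allows me to bound the tail by $c \bigl( \|u\|_{L^2(B_R)}^2 + \|u\|_{L^2(B_R)} \|u\|_{L^1_{x_0, \beta}(\R^n)} \bigr)$ after using $|u(x) u(y)| \eta(x)^2 \le \eta(x)^2 u(x)^2 + \eta(x)^2 |u(x)| |u(y)|$.

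Combining everything, applying $|\int f \, u \eta^2 \, dx| \le \|f\|_{L^2(\Omega)} \|u\|_{L^2(\Omega)}$, and using Young's inequality on the various cross terms (with a small coefficient in front of $[u\eta]_{H^s(B_R)}^2$ to absorb a residual seminorm term into the main one), produces the desired estimate~\eqref{caccioppoli}. The main obstacle I anticipate is the tail term: one must carefully track the interplay between the near-field bound~\eqref{Kbounds1} and the far-field bound~\eqref{Kbounds2} so that the $L^1_{x_0,\beta}$ norm -- and not a stronger norm of $u$ at infinity -- naturally appears on the right-hand side, with the constant exhibiting the correct dependence on $r$ and $\dist(B_r(x_0), \partial\Omega)$.
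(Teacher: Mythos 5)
Your proposal is correct and follows essentially the same route as the paper's proof: the same test function $u\eta^2$, the same algebraic identity isolating $[\eta u]_{H^s}^2$, the same near/far splitting of $\C_{B_R}$, and the same key geometric bound $K(x,y)\le c\,w_{x_0,\beta}$ on the tail (where one variable stays in $\supp\eta$ and the other is outside $B_R$), combined with Young's inequality and a covering argument to remove the restriction to small radii. No gaps to report.
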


We stress that hypothesis~\eqref{Kreg} is not assumed here. Consequently, Proposition~\ref{caccioppoliprop} holds for a general measurable~$K$ which only satisfies~\eqref{Kbounds}.

\begin{proof}[Proof of Proposition~\ref{caccioppoliprop}]
Our argument follows the lines of those contained in the above mentioned papers. Anyway, we provide all the details for the reader's convenience.

First, observe that we may assume~$r < 1 / 2$ for the beginning. The case of a general radius~$r > 0$ will then follow by a covering argument. Take~$R > 0$ in such a way that~$r < R < 1 / 2$ and~$B_R(x_0) \subset \Omega$. To simplify the notation, we write~$B_\rho$ instead of~$B_\rho(x_0)$, for any~$\rho > 0$.

Let~$\eta \in C^\infty_0(\R^n)$ be a cut-off function such that
\begin{equation} \label{etacutoff5}
\begin{cases}
\supp \eta \subset B_{(R + r) / 2} & \\
                  0 \le \eta \le 1 & \quad \mbox{in } \R^n \\
                          \eta = 1 & \quad \mbox{in } B_r \\
     |\nabla \eta| \le 4 / (R - r) & \quad \mbox{in } \R^n.
\end{cases}
\end{equation}

Testing~\eqref{weakform} with~$\varphi := \eta^2 u \in X_0(\Omega)$ we get
\begin{equation} \label{plug5}
\begin{aligned}
& \int_{B_R} f(x) \eta^2(x) u(x) \, dx \\
& \hspace{30pt} = \int_{B_R} \int_{B_R} \left( u(x) - u(y) \right) \left( \eta^2(x) u(x) - \eta^2(y) u(y) \right) K(x, y) \, dx dy \\
& \hspace{30pt} \quad - 2 \int_{\R^n \setminus B_R} \int_{B_R} \left( u(x) - u(y) \right) \eta^2(y) u(y) K(x, y) \, dx dy \\
& \hspace{30pt} =: I - 2 J.
\end{aligned}
\end{equation}

We estimate~$I$. Notice that
\begin{align*}
\left( u(x) - u(y) \right) & \left( \eta^2(x) u(x) - \eta^2(y) u(y) \right) \\
& \hspace{30pt} = \eta^2(x) u^2(x) - \eta^2(x) u(x) u(y) - \eta^2(y) u(x) u(y) + \eta^2(y) u^2(y) \\
& \hspace{30pt} = {\left| \eta(x) u(x) - \eta(y) u(y) \right|}^2 - {\left| \eta(x) - \eta(y) \right|}^2 u(x) u(y) \\
& \hspace{30pt} \ge {\left| \eta(x) u(x) - \eta(y) u(y) \right|}^2 - {\left| \eta(x) - \eta(y) \right|}^2 |u(x)| |u(y)|,
\end{align*}
and, therefore, using~\eqref{Kbounds1},
\begin{equation} \label{I5}
\begin{aligned}
I & \ge \lambda \int_{B_R} \int_{B_R} \frac{{\left| \eta(x) u(x) - \eta(y) u(y) \right|}^2}{|x - y|^{n + 2 s}} \, dx dy \\
& \quad - \Lambda \int_{B_R} \int_{B_R} \frac{{\left| \eta(x) - \eta(y) \right|}^2 |u(x)| |u(y)|}{|x - y|^{n + 2 s}} \, dx dy.
\end{aligned}
\end{equation}
Applying~\eqref{etacutoff5} and Young's inequality, we deduce
\begin{align*}
\int_{B_R} \int_{B_R} \frac{{\left| \eta(x) - \eta(y) \right|}^2 |u(x)| |u(y)|}{|x - y|^{n + 2 s}} \, dx dy & \le \frac{16}{(R - r)^2} \int_{B_R} \int_{B_R} \frac{|u(x)| |u(y)|}{|x - y|^{n + 2 s - 2}} \, dx dy \\
& \le \frac{16}{(R - r)^2} \int_{B_R} \int_{B_R} \frac{|u(x)|^2}{|x - y|^{n + 2 s - 2}} \, dx dy \\
& \le c \|  u \|_{L^2(B_R)}^2,
\end{align*}
which, together with~\eqref{I5}, leads to
\begin{equation} \label{Ib5}
I \ge \lambda [\eta u]_{H^s(B_R)}^2 - c \| u \|_{L^2(B_R)}^2.
\end{equation}

We now deal with~$J$. Let~$x \in \R^n \setminus B_R$ and~$y \in B_{(R + r) / 2}$. Then,
$$
|y - x_0| \le \frac{R + r}{2} \le \frac{R + r}{2 R} |x - x_0|,
$$
and so
$$
|x - y| \ge |x - x_0| - |y - x_0| \ge \frac{R - r}{2 R} |x - x_0| \ge \frac{R - r}{4} \left( 1 + |x - x_0| \right),
$$
since~$R < 1$. In view of this and~\eqref{Kbounds} we have
\begin{equation} \label{Kboundsded}
K(x, y) \le \Lambda \frac{\chi_{[0, 1)}(|x - y|)}{|x - y|^{n + 2 s}} + M \frac{\chi_{[1, +\infty)}(|x - y|)}{|x - y|^{n + \beta}}
\le \frac{c}{1 + |x - x_0|^{n + \beta}}.
\end{equation}
Moreover, using~\eqref{etacutoff5} we write
$$
|u(x) - u(y)| |u(y)| \eta^2(y) \le |u(x)| |u(y)| + |u(y)|^2,
$$
and hence by~\eqref{Kboundsded} and Young's inequality we get
\begin{equation} \label{J5}
\begin{aligned}
|J| & \le c \int_{\R^n \setminus B_R} \left( \int_{B_{(R + r) / 2}} \frac{|u(x) - u(y)| |u(y)| \eta^2(y)}{1 + |x - x_0|^{n + \beta}} \, dy \right) dx \\
& \le c \left[ \int_{B_{(R + r) / 2}} |u(y)|^2 \, dy + \left( \int_{\R^n \setminus B_R} \frac{|u(x)|}{1 + |x - x_0|^{n + \beta}} \, dx \right)^2 \right] \\
& \le c \left( \| u \|_{L^2(B_R)}^2 + \| u \|_{L^1_{x_0, \beta}(\R^n)}^2 \right).
\end{aligned}
\end{equation}

Finally, we easily compute
\begin{equation} \label{f5}
\left| \int_{B_R} f(x) u(x) \eta^2(x) \, dx \right| \le \frac{1}{2} \left( \| u \|_{L^2(B_R)}^2 + \| f \|_{L^2(\Omega)}^2 \right).
\end{equation}

Putting~\eqref{plug5},~\eqref{Ib5},~\eqref{J5} and~\eqref{f5} together, we obtain
$$
[u]_{H^s(B_r)} \le [\eta u]_{H^s(B_R)} \le c \left( \| u \|_{L^2(\Omega)} + \| u \|_{L^1_{x_0, \beta}(\R^n)} + \| f \|_{L^2(\Omega)} \right),
$$
where the first inequality follows from~\eqref{etacutoff5}. Thus,~\eqref{caccioppoli} is proved.
\end{proof}

\section{Proof of Theorem~\ref{nikmainthm}} \label{mainsec}

We are finally in position to proceed with the demonstration of our principal contribution.

\begin{proof}[Proof of Theorem~\ref{nikmainthm}]
Let~$x_0 \in \Omega$ and~$R \in (0, 1 / 56)$ be such that~$B_{56 R}(x_0) \subset \subset \Omega$. In the following any ball~$B_r$ will always be assumed to be centered at~$x_0$. Let~$\eta \in C^\infty_0(\R^n)$ be a cut-off function satisfying
\begin{equation} \label{etacutoff}
\begin{cases}
\supp \eta \subset B_{2 R} & \\
          0 \le \eta \le 1 & \quad \mbox{in } \R^n \\
                  \eta = 1 & \quad \mbox{in } B_R \\
   |\nabla \eta| \le 2 / R & \quad \mbox{in } \R^n.
\end{cases}
\end{equation}

Fix~$z \in \R^n$, with~$|z| < R$, and plug~$\varphi := \Delta_{-z}^2 \left( \eta^2 \Delta_z^2 u \right) \in X_0(\Omega)$ in formulation~\eqref{weakform}. Writing~$U = \Delta_z^2 u$, we have
\begin{equation} \label{plug}
\begin{aligned}
& \int_{B_{3 R}} f(x) \Delta_{-z}^2 \left( \eta^2 U \right)(x) \, dx \\
& \hspace{30pt} = \int_{B_{8 R}} \int_{B_{8 R}} \left( u(x) - u(y) \right) \left( \Delta_{-z}^2 \left( \eta^2 U \right)(x) - \Delta_{-z}^2 \left( \eta^2 U \right)(y) \right) K(x, y) \, dx dy \\
& \hspace{30pt} \quad - 2 \int_{\R^n \setminus B_{8 R}} \int_{B_{8 R}} \left( u(x) - u(y) \right) \Delta_{-z}^2 \left( \eta^2 U \right)(y) K(x, y) \, dy dx \\
& \hspace{30pt} =: I - 2 J.
\end{aligned}
\end{equation}

We apply Lemma~\ref{discintpartlem} to~$I$ with~$v = \eta^2 U$, obtaining
\begin{equation} \label{II1I2}
\begin{aligned}
I & = \int_{B_{6 R}} \int_{B_{6 R}} \left( U(x) - U(y) \right) \left(\eta^2(x) U(x) - \eta^2(y) U(y) \right) K(x, y) \, dx dy \\
& \quad + \sum_{i = 1}^2 (-1)^i \binom{2}{i} \int_{B_{6 R}} \int_{B_{6 R}} \left( \tau_{i z} u(x) - \tau_{i z} u(y) \right) \left( \left( \eta^2 U \right)(x) - \left( \eta^2 U \right)(y) \right) \\
& \hspace{300pt} \times \tensor*[]{\Delta}{_{i z}} K(x, y) \, dx dy \\
& \quad - 2 \sum_{i = 0}^2 (-1)^i \binom{2}{i} \int_{B_{8 R}} \int_{B_{8 R}} \left( u(x) - u(y) \right) \left( \tau_{- i z} \chi_{\R^n \setminus B_{6 R}}(x) \tau_{- i z} \left( \eta^2 U \right)(y) \right) \\
& \hspace{300pt} \times K(x, y) \, dx dy \\
& =: I_1 + I_2 - 2 I_3.
\end{aligned}
\end{equation}
Arguing as we did to obtain~\eqref{I5} in Proposition~\ref{caccioppoliprop}, we recover
\begin{equation} \label{I1}
I_1 \ge \lambda [\eta \Delta_z^2 u]_{H^s(B_{6 R})}^2 - c \| \Delta_z^2 u \|_{L^2(B_{6 R})}^2.
\end{equation}
The term~$I_2$ can be dealt with as follows. Applying~\eqref{Kreg} together with Young's inequality, we have
\begin{align*}
|I_2| & \le 2 \Gamma |z|^s \sum_{i = 1}^2 \int_{B_{6 R}} \int_{B_{6 R}} \frac{\left| \tau_{i z} u(x) - \tau_{i z} u(y) \right| \left| \left( \eta^2 U \right)(x) - \left( \eta^2 U \right)(y) \right|}{|x - y|^{n + 2 s}} \, dx dy \\
& \le c |z|^s \left( \delta [u]_{H^s(B_{8 R})}^2 + \delta^{- 1} [\eta^2 \Delta_z^2 u]_{H^s(B_{6 R})}^2 \right),
\end{align*}
with~$\delta > 0$. Taking~$\delta = \varepsilon^{- 2} |z|^s$, for some small~$\varepsilon > 0$, we get
\begin{equation} \label{I2}
|I_2| \le c \left( \varepsilon^{- 2} |z|^{2 s} [u]_{H^s(B_{8 R})}^2 + \varepsilon^2 [\eta^2 \Delta_z^2 u]_{H^s(B_{6 R})}^2 \right).
\end{equation}
We now estimate~$I_3$. By adding and subtracting the terms~$\tau_{- 2 z} \chi_{\R^n \setminus B_{6 R}} (x) \tau_{-z} (\eta^2 U)(y)$ and~$\tau_{-z} \chi_{\R^n \setminus B_{6 R}}(x) (\eta^2 U)(y)$, we see that
\begin{align*}
I_3 & = \sum_{i = 0}^1 \int_{B_{8 R}} \int_{B_{8 R}} \left( u(x) - u(y) \right) \tau_{- (i + 1) z} \chi_{\R^n \setminus B_{6 R}}(x) \Deltamz (\eta^2 U)(y - i z) K(x, y) \, dx dy \\
& \quad - \sum_{i = 0}^1 \int_{B_{8 R}} \int_{B_{8 R}} \left( u(x) - u(y) \right) \Deltamz \chi_{\R^n \setminus B_{6 R}}(x - i z) \tau_{- i z} (\eta^2 U)(y) K(x, y) \, dx dy \\
& =: I_3^{(1)} - I_3^{(2)}.
\end{align*}
On the one hand, using~\eqref{Kbounds1} and again the weighted Young's inequality,
\begin{align*}
\left| I_3^{(1)} \right| & \le \Lambda \sum_{i = 0}^1 \int_{B_{3 R} + i z} |\Deltamz \left( \eta^2 U \right)(y - i z)| \left( \int_{B_{8 R} \setminus \left( B_{6 R} + (i + 1) z \right)} \frac{|u(x)| + |u(y)|}{|x - y|^{n + 2 s}} \, dx \right) dy \\
& \le c \left( \delta \| u \|_{L^2(B_{8 R})}^2 + \delta^{-1} \| \Deltamz \left( \eta^2 \Delta_z^2 u \right) \|_{L^2(B_{3 R})}^2 \right).
\end{align*}
On the other hand
$$
\left| \Deltamz \chi_{\R^n \setminus B_{6 R}}(x - i z) \right| = \chi_{\left( B_{6 R} + (i + 1) z \right) \Delta \left( B_{6 R} + i z \right)}(x),
$$
and hence
\begin{align*}
\left| I_3^{(2)} \right| & \le \Lambda \sum_{i = 0}^1 \int_{B_{2 R} + i z} \left| \eta^2(y) U(y) \right| \left( \int_{\left( B_{6 R} + (i + 1) z \right) \Delta \left( B_{6 R} + i z \right)} \frac{|u(x)| + |u(y)|}{|x - y|^{n + 2 s}} \, dx \right) dy \\
& \le c \left( \gamma \left| \left( B_{6 R} + z \right) \Delta B_{6 R} \right| \| u \|_{L^2(B_{8 R})}^2 + \gamma^{-1} \| \Delta_z^2 u \|_{L^2(B_{3 R})}^2 \right),
\end{align*}
for any~$\gamma > 0$. In view of Lemma~\ref{defectlem} we have
$$
\left| \left( B_{6 R} + z \right) \Delta B_{6 R} \right| \le c |z|.
$$
Therefore,
$$
\left| I_3^{(2)} \right| \le c \left( \gamma |z| \| u \|_{L^2(B_{8 R})}^2 + \gamma^{-1} \| \Delta_z^2 u \|_{L^2(B_{3 R})}^2 \right).
$$
The choices~$\delta = \varepsilon^{-2} |z|^{2 s}$ and~$\gamma = |z|^{2 \sigma - 1}$, for some
\begin{equation} \label{sigmabound}
\sigma \ge \max \left\{ s, 1 / 2 \right\},
\end{equation}
then yield
\begin{equation*}
|I_3| \le c \left[ \varepsilon^{- 2} |z|^{2 s} \| u \|_{L^2(B_{8 R})}^2 + \varepsilon^2 |z|^{- 2 s} \| \Deltamz \left( \eta^2 \Delta_z^2 u \right) \|_{L^2(B_{3 R})}^2 + |z|^{1 - 2 \sigma} \| \Delta_z^2 u \|_{L^2(B_{3 R})}^2 \right].
\end{equation*}
By combining~\eqref{I1} and~\eqref{I2} with the above inequality, recalling~\eqref{II1I2} and~\eqref{sigmabound} we get
\begin{equation} \label{I}
\begin{aligned}
I & \ge \lambda [\eta \Delta_z^2 u]_{H^s(B_{6 R})}^2 - c \Big[ \varepsilon^{- 2} |z|^{2 s} \| u \|_{H^s(B_{8 R})}^2  + |z|^{1 - 2 \sigma} \| \Delta_z^2 u \|_{L^2(B_{6 R})}^2 \\
& \quad + \varepsilon^2 \left( [\eta^2 \Delta_z^2 u]_{H^s(B_{6 R})}^2 + |z|^{- 2 s} \| \Deltamz \left( \eta^2 \Delta_z^2 u \right) \|_{L^2(B_{3 R})}^2 \right) \Big].
\end{aligned}
\end{equation}

Now, we turn our attention to~$J$. Arguing as in~\eqref{J5}, we use once again~\eqref{Kbounds},~\eqref{etacutoff} and Young's inequality to obtain
$$
|J| \le c \left[ \delta \left( \| u \|_{L^2(B_{3 R})}^2 + \| u \|_{L^1_{x_0, \beta}(\R^n)}^2 \right) + \delta^{-1} \| \Delta_{-z}^2 \left( \eta^2 \Delta_z^2 u \right) \|_{L^2(B_{3 R})}^2 \right],
$$
for any~$\delta > 0$.
Setting again~$\delta = \varepsilon^{- 2} |z|^{2 s}$, this becomes
\begin{equation} \label{J}
|J| \le c \left[ \varepsilon^{- 2} |z|^{2 s} \left( \| u \|_{L^2(B_{3 R})}^2 + \| u \|_{L^1_{x_0, \beta}(\R^n)}^2 \right) + \varepsilon^2 |z|^{- 2 s} \| \Delta_{-z}^2 \left( \eta^2 \Deltaz u \right) \|_{L^2(B_{3 R})}^2 \right].
\end{equation}

Finally, we use Young's inequality as before to deduce
$$
\left| \int_{B_{3 R}} f(x) \Delta_{-z}^2 \left( \eta^2 U \right)(x) \, dx \right| \le c \left[ \varepsilon^{- 2} |z|^{2 s} \| f \|_{L^2(\Omega)}^2 + \varepsilon^2 |z|^{- 2 s} \| \Delta_{-z}^2 \left( \eta^2 \Delta_z^2 u \right) \|_{L^2(B_{3 R})}^2 \right].
$$

By combining this last estimation,~\eqref{I},~\eqref{J} with~\eqref{plug} and noticing that
$$
\| \Delta_{-z}^2 \left( \eta^2 \Deltaz u \right) \|_{L^2(B_{3 R})} \le 2 \| \Deltamz \left( \eta^2 \Deltaz u \right) \|_{L^2(B_{4 R})},
$$
we find
\begin{equation} \label{final1}
\begin{aligned}
[\eta \Delta_z^2 u]_{H^s(B_{6 R})} & \le c \Big[ \varepsilon \left( [\eta^2 \Delta_z^2 u]_{H^s(B_{6 R})} + |z|^{- s} \| \Deltamz \left( \eta^2 \Delta_z^2 u \right) \|_{L^2(B_{4 R})} \right) \\
& \quad + |z|^{1 / 2 - \sigma} \| \Delta_z^2 u \|_{L^2(B_{6 R})} \\
& \quad + \varepsilon^{- 1} |z|^s \left( \| u \|_{H^s(B_{8 R})} + \| u \|_{L^1_{x_0, \beta}(\R^n)} + \| f \|_{L^2(\Omega)} \right) \Big].
\end{aligned}
\end{equation}
In view of Proposition~\ref{sobnik1}, we have\footnote{Here and in the remainder of the proof we freely swap between some of the equivalent norms of Nikol'skii spaces. In this regard, we recommend the reader to refer to Section~\ref{besovsec} and, in particular, Remark~\ref{nikaltnormrmk}.}
\begin{equation} \label{Delta-z}
\begin{aligned}
\| \Deltamz \left( \eta^2 \Delta_z^2 u \right) \|_{L^2(B_{4 R})} & \le \| \Deltamz \left( \eta^2 \Delta_z^2 u \right) \|_{L^2((B_{5 R})_{-z})} \\
& \le |z|^s [\eta^2 \Delta_z^2 u]_{N^{s, 2}(B_{5 R})} \\
& \le c |z|^s \| \eta^2 \Delta_z^2 u \|_{H^s(B_{5 R})}.
\end{aligned}
\end{equation}
Moreover,
\begin{align*}
& \left| \left( \eta^2 \Delta_z^2 u \right)(x) - \left( \eta^2 \Delta_z^2 u \right)(y) \right|^2 \\
& \hspace{50pt} \le 2 \left( |\eta(x)|^2 \left| \left( \eta \Delta_z^2 u \right)(x) - \left( \eta \Delta_z^2 u \right)(y) \right|^2 + \left| \left( \eta \Delta_z^2 u \right)(y) \right|^2 |\eta(x) - \eta(y)|^2 \right),
\end{align*}
and hence, recalling~\eqref{etacutoff},
\begin{equation} \label{eta2eta}
\begin{aligned}
[\eta^2 \Delta_z^2 u]_{H^s(B_{6 R})}^2 & \le c \left[ [\eta \Delta_z^2 u]_{H^s(B_{6 R})}^2 + \int_{B_{6 R}} |\Delta_z^2 u(y)|^2 \left[ \int_{B_{6 R}} |x - y|^{- n - 2 s + 2} \, dx \right] dy \right] \\
& \le c \left[ [\eta \Delta_z^2 u]_{H^s(B_{6 R})}^2 + \| \Delta_z^2 u \|_{L^2(B_{6 R})}^2 \right].
\end{aligned}
\end{equation}
Consequently, if we choose~$\varepsilon$ suitably small, by~\eqref{Delta-z},~\eqref{eta2eta} and Proposition~\ref{caccioppoliprop}, estimate~\eqref{final1} becomes
\begin{equation} \label{final2}
\begin{aligned}
[\eta \Delta_z^2 u]_{H^s(B_{6 R})} & \le c \bigg[ |z|^{1 / 2 - \sigma} \| \Delta_z^2 u \|_{L^2(B_{6 R})} \\
& \quad + |z|^s \left( \| u \|_{L^2(\Omega)} + \| u \|_{L^1_{x_0, \beta}(\R^n)} + \| f \|_{L^2(\Omega)} \right) \bigg],
\end{aligned}
\end{equation}
where we also employed~\eqref{sigmabound}. Applying again Proposition~\ref{sobnik1},
$$
\| \tensor*[]{\Delta}{_w} \left( \Delta_z^2 u \right) \|_{L^2 ((B_R)_w)} \le |w|^s [\Delta_z^2 u]_{N^{s, 2}(B_R)} \le c |w|^s \| \Delta_z^2 u \|_{H^s(B_R)},
$$
for any~$w \in \R^n$. Taking~$w = z$, from~\eqref{kzinjz},~\eqref{etacutoff},~\eqref{sigmabound} and~\eqref{final2} we then get
\begin{equation} \label{final3}
\begin{aligned}
\| \Delta_z^3 u \|_{L^2((B_R)_{3 z})} & \le \| \Delta_z^3 u \|_{L^2((B_R)_z)} \le c |z|^s \| \Delta_z^2 u \|_{H^s(B_R)} \\
& \le c |z|^s \left( \| \Delta_z^2 u \|_{L^2(B_R)} + [\eta \Delta_z^2 u]_{H^s(B_{6 R})} \right) \\
& \le c \bigg[ |z|^{1 / 2 - \sigma + s} \| \Delta_z^2 u \|_{L^2(B_{6 R})} \\
& \quad + |z|^{2 s} \left( \| u \|_{L^2(\Omega)} + \| u \|_{L^1_{x_0, \beta}(\R^n)} + \| f \|_{L^2(\Omega)} \right) \bigg].
\end{aligned}
\end{equation}

Now we consider separately the two cases~$s \in (0, 1 / 2]$ and~$s \in (1 / 2, 1)$.

In the first situation, we set~$\sigma = 1 / 2$. Notice that the choice is compatible with~\eqref{sigmabound}. By Proposition~\ref{sobnik1},
\begin{equation} \label{Delta2u}
\| \Delta_z^2 u \|_{L^2(B_{6 R})} \le \| \Delta_z^2 u \|_{L^2((B_{7 R})_z)} \le |z|^s [u]_{N^{s, 2}(B_{7 R})} \le c |z|^s \| u \|_{H^s(B_{7 R})}.
\end{equation}
Therefore, from~\eqref{final3}
\begin{equation} \label{finals<}
\| \Delta_z^3 u \|_{L^2((B_R)_{3 z})} \le c |z|^{2 s} \left( [u]_{H^s(B_{56 R})} + \| u \|_{L^2(\Omega)} + \| u \|_{L^1_{x_0, \beta}(\R^n)} + \| f \|_{L^2(\Omega)} \right),
\end{equation}
and thus~$u \in N^{2 s, 2}(B_R)$.

Now we address the more delicate case~$s \in (1 / 2, 1)$. Here we choose~$\sigma = s$ and first deduce from~\eqref{final3} and~\eqref{Delta2u} that
$$
\| \Delta_z^3 u \|_{L^2((B_R)_{3 z})} \le c |z|^{1 / 2 + s} \left( [u]_{H^s(B_{7 R})} + \| u \|_{L^2(\Omega)} + \| u \|_{L^1_{x_0, \beta}(\R^n)} + \| f \|_{L^2(\Omega)} \right).
$$
Note that such a~$\sigma$ is admissible for~\eqref{sigmabound}, since~$s > 1 / 2$. Repeating the same argument with~$B_{8 R}$ in place of~$B_R$, we see that~$u \in N^{1 / 2 + s, 2}(B_{8 R})$ with
$$
[u]_{N^{1 / 2 + s, 2}(B_{8 R})} \le c \left( [u]_{H^s(B_{56 R})} + \| u \|_{L^2(\Omega)} + \| u \|_{L^1_{x_0, \beta}(\R^n)} + \| f \|_{L^2(\Omega)} \right).
$$
Consequently,
\begin{align*}
\| \Delta_z^2 u \|_{L^2(B_{6 R})} & \le \| \Delta_z^2 u \|_{L^2((B_{8 R})_{2 z})} \le |z|^{1 / 2 + s} [u]_{N^{1 / 2 + s}(B_{8 R})} \\
& \le c |z|^{1 / 2 + s} \left( [u]_{H^s(B_{5 6 R})} + \| u \|_{L^2(\Omega)} + \| u \|_{L^1_{x_0, \beta}(\R^n)} + \| f \|_{L^2(\Omega)} \right).
\end{align*}
Using this last estimate in combination with~\eqref{final3} and selecting~$\sigma = 1$ there, again in agreement with~\eqref{sigmabound}, we conclude that~$u \in N^{2 s, 2}(B_R)$ and~\eqref{finals<} is true also for~$s \in (1 / 2, 1)$.

Finally, we use Proposition~\ref{caccioppoliprop} to control the Gagliardo semi-norm on the right hand side of~\eqref{finals<} and recover
\begin{equation} \label{finalsob}
[u]_{N^{2 s, 2}(B_R)} \le c \left( \| u \|_{L^2(\Omega)} + \| u \|_{L^1_{x_0, \beta}(\R^n)} + \| f \|_{L^2(\Omega)} \right).
\end{equation}
Then,~\eqref{nikmainest} follows for a general open~$\Omega' \subset \subset \Omega$ by a standard covering argument.\footnote{Note that the right hand side of~\eqref{finalsob} depends on the norm~$\| \cdot \|_{L^1_{x_0, \beta}(\R^n)}$ which in turn varies with~$x_0$. Consequently, while performing the covering argument one should take care that those norms depend on the centers of the covering balls. However, as noted in Section~\ref{defsec} such norms are all equivalent. The relative compactness of~$\Omega'$ then allows the use of a finite number of balls, thus preventing the blow-up of the constant~$c$.}
\end{proof}

We conclude this section with some brief comments on the technique just displayed.

To achieve the result we tested the equation with a function modelled on the double increment~$\Delta_z^2 u$, which may seem a little unnatural and artificial. In fact, for~$s \in (0, 1/2]$ the first order increment would have been sufficient. On the other hand, when~$s > 1 / 2$ this strategy is no more conclusive, basically since it leads to~$u \in N^{1 / 2 + s, 2}_\loc(\Omega)$ only. In order to take advantage of this intermediate regularity and then gain the extra~\emph{$s - 1 / 2$ derivatives}, we need the order of the increment to be at least~$2$.

\section{Proof of Theorem~\ref{sobmainthm}} \label{sobmainsec}

As previously discussed in Section~\ref{defsec}, Theorem~\ref{sobmainthm} essentially follows from Theorem~\ref{nikmainthm}, in light of the embedding of Proposition~\ref{sobnik2}. The only detail left is that the results of Section~\ref{besovsec} - specifically, Proposition~\ref{sobnik2} - are only proved for sets having smooth boundary.

But this is not a big drawback. As a matter of fact, we know that estimate~\eqref{sobmainest} holds for any domain~$\Omega' \subset \subset \Omega$, with~$\partial \Omega' \in C^\infty$. Then, it can be further extended to any~$\Omega'$, by noticing that it is always possible to find~$\Omega''$ with~$C^\infty$ boundary, such that~$\Omega' \subset \subset \Omega'' \subset \subset \Omega$.

\section{Towards the optimal regularity up to the boundary} \label{optsec}

In this conclusive section we briefly comment on the global Sobolev regularity for the Dirichlet problem driven by~\eqref{maineq}.

For~$x \in \R^n$, we define~$u_s(x) := (x_n)_+^s$. The function~$u_s$ solves
\begin{equation} \label{u_sprob}
\begin{cases}
(-\Delta)^s u_s = 0 & \quad \mbox{in } \R^n_+ := \R^{n - 1} \times (0, +\infty) \\
\,          u_s = 0 & \quad \mbox{in } \R^n \setminus \R^n_+.
\end{cases}
\end{equation}
To see this, we write~$u_s(x) = \mu_s(x_n)$, with~$\mu_s(t) := t_+^s$ as~$t \in \R$, and we compute for~$x \in \R^n_+$
\begin{align*}
(-\Delta)^s u_s(x) & = 2 \, \PV \int_{\R^n} \frac{u_s(x) - u_s(y)}{|x - y|^{n + 2 s}} \, dy \\
& = 2 \, \PV \int_{\R} \frac{\mu_s(x_n) - \mu_s(y_n)}{|x_n - y_n|^{n + 2 s}} \left[ \int_{\R^{n - 1}} \left( 1 + \frac{|x' - y'|^2}{|x_n - y_n|^2} \right)^{- \frac{n + 2 s}{2}} dy' \right] dy_n.
\end{align*}
Note that we use~$x'$ and~$y'$ to indicate the first~$n - 1$ components of~$x$ and~$y$, respectively. Changing variables by setting~$z' := |y_n - x_n|^{- 1} (y' - x')$ in the inner integral, we get
$$
(-\Delta)^s u_s(x) = \varpi_{n, s} (-\Delta)^s \mu_s(x_n),
$$
where
$$
\varpi_{n, s} := \int_{\R^{n - 1}} \left( 1 + |z'|^2 \right)^{- \frac{n + 2 s}{2}} dz',
$$
is a finite constant. Then, the equation in~\eqref{u_sprob} follows from the fact that~$\mu_s$ is~$s$-harmonic in the half-line~$(0, +\infty)$, as showed for instance in~\cite{CRS10, RS14} or~\cite{BV15}.

\medskip

Of course, the function~$u_s$ is of class~$C^{0, s}_\loc(\R^n)$, but not~$C^{0, \alpha}_\loc(\R^n)$, with~$\alpha > s$. On the other hand, the following proposition sheds some light on which could be the optimal Sobolev regularity of~$u_s$, at least when~$s \ge 1 / 2$.

\begin{proposition} \label{optprop}
Let~$s \in [1 / 2, 1)$. Then,~$u_s \notin H^{2 s}_\loc(\overline{\R^n_+})$.
\end{proposition}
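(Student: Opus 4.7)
The plan is to argue by contradiction, assuming~$u_s \in H^{2s}(V')$ on some~$V' = B_R(0) \cap \R^n_+$, with~$R > 0$. The argument naturally splits at the threshold~$s = 1/2$, because there the Nikol'skii scaling used below degenerates and one loses the contradiction.

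For the borderline case~$s = 1 / 2$, I would argue directly at the first-derivative level. Since~$u_{1/2}(x) = x_n^{1/2}$ on~$V'$, an integration by parts against a test function yields the weak identity~$\partial_n u_{1/2}(x) = \tfrac{1}{2} x_n^{-1/2}$, and then Fubini gives
$$\int_{V'} \left| \partial_n u_{1/2}(x) \right|^2 dx \ge c \int_0^R x_n^{-1} \, dx_n = + \infty,$$
contradicting~$u_{1/2} \in H^1(V') = H^{2 s}(V')$.

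For~$s \in (1/2, 1)$, I would first invoke Proposition~\ref{sobnik1} to embed~$H^{2s}(V') \hookrightarrow N^{2s, 2}(V')$, thereby reducing the task to contradicting~$u_s \in N^{2s, 2}(V')$. Writing~$u_s(x) = \mu_s(x_n)$ with~$\mu_s(t) := t_+^s$, by Fubini in the transverse variable the problem becomes one-dimensional: it suffices to bound
$$\int_0^{R / 2} \left| \Delta_\eta^2 \mu_s(t) \right|^2 dt$$
from below as~$\eta \to 0^+$. The core observation is the elementary scaling identity on~$(0, \eta)$,
$$\Delta_\eta^2 \mu_s(\eta u) = \eta^s \left[ (u + 2)^s - 2 (u + 1)^s + u^s \right] =: \eta^s \, G_s(u), \qquad u \in (0, 1),$$
where~$G_s$ is continuous and~$G_s(0) = 2^s - 2 \ne 0$ (since~$s < 1$). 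Integrating the square then yields
$$\| \Delta_{\eta e_n}^2 u_s \|_{L^2(V'_{2 \eta e_n})} \ge c \, \eta^{s + 1/2},$$
so the Nikol'skii ratio~$\eta^{-2 s} \| \Delta_{\eta e_n}^2 u_s \|_{L^2(V'_{2 \eta e_n})}$ diverges like~$\eta^{1/2 - s}$ as~$\eta \to 0^+$, contradicting membership in~$N^{2s, 2}(V')$ since~$s > 1/2$.

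The main conceptual obstacle is precisely this threshold dichotomy: the exponent~$1/2 - s$ must be strictly negative to produce blow-up, which fails at~$s = 1 / 2$. This is why the endpoint needs a separate, more elementary argument on the gradient, rather than being absorbed into a single Nikol'skii-based estimate. Beyond this split, everything reduces to the one-dimensional scaling calculation on~$\mu_s$ sketched above.
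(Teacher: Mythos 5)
Your argument is correct, but it runs along a genuinely different track from the paper's. The paper works entirely at the level of the Gagliardo seminorm of the gradient: since $2s>1$, membership in $H^{2s}(Q)$ forces $[\partial_n u_s]_{W^{2s-1,2}(Q)}<+\infty$, and the paper shows the one-dimensional double integral $E=\int_0^1\int_0^1 |\mu_s'(t)-\mu_s'(r)|^2|t-r|^{-1-2(2s-1)}\,dt\,dr$ diverges via the pointwise bound $|\mu_s'(t)-\mu_s'(r)|\ge s(1-s)t^{s-2}(t-r)$, after a reduction of the transverse integration through a measure estimate on the set $S(\delta)$ of nearby pairs $(x',y')$. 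You instead pass through the Nikol'skii scale: you use the embedding $H^{2s}\subseteq N^{2s,2}$ of Proposition~\ref{sobnik1} and contradict the finiteness of the second-difference seminorm via the exact scaling identity $\Delta_\eta^2\mu_s(\eta u)=\eta^s G_s(u)$ with $G_s(0)=2^s-2\ne 0$, which gives $\|\Delta_{\eta e_n}^2 u_s\|_{L^2}\gtrsim\eta^{s+1/2}$ and hence blow-up of the ratio $\eta^{-2s}\|\Delta_{\eta e_n}^2u_s\|_{L^2}$ for $s>1/2$. Your route is arguably more in the spirit of the rest of the paper (it reuses the machinery of Section~\ref{besovsec}), avoids differentiating $u_s$, and transparently exhibits the sharp exponent $s+1/2$ as the exact order of the second difference, which also explains the paper's closing remark for $s<1/2$; the paper's route stays inside the Sobolev framework and needs no embedding between scales. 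Your endpoint treatment of $s=1/2$ (where $\partial_n u_{1/2}=\tfrac12 x_n^{-1/2}\notin L^2$) coincides with what the paper calls ``immediate''. Two cosmetic points: Proposition~\ref{sobnik1} is stated for bounded $C^\infty$ domains, so you should replace $V'=B_R\cap\R^n_+$ by a smooth subdomain touching $\{x_n=0\}$ before invoking it (the paper's cylinder $Q$ has the same issue); and you should say explicitly that $|G_s|\ge c>0$ on some interval $(0,u_0)$ by continuity, so that the $L^2$ lower bound is taken over $\{0<x_n<\eta u_0\}$.
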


\begin{proof}
We focus on the case~$s > 1 / 2$, as when~$s = 1 / 2$ the computation is immediate.

Denoting with~$B'_r(z')$ the~$(n - 1)$-dimensional open ball of radius~$r$ and center~$z'$ - with~$B_r' := B_r'(0)$ as usual - and with~$Q$ the cylinder~$B'_1 \times (0, 1)$, we shall prove that
\begin{equation} \label{optth}
u_s \notin H^{2 s}(Q).
\end{equation}
First, setting
$$
E := \int_0^1 \int_0^1 \frac{|\mu_s'(t) - \mu_s'(r)|^2}{|t - r|^{1 + 2 (2 s - 1)}} \, dt dr,
$$
we claim that
\begin{equation} \label{optclaim}
E \mbox{ is not finite.}
\end{equation}

Assuming for the moment~\eqref{optclaim} to hold, we check that then~\eqref{optth} follows. While for~$n = 1$ this is immediate, the case~$n \ge 2$ requires some comments. Indeed,
\begin{align*}
\| u_s \|_{H^{2 s}(Q)}^2 & \ge \int_Q \int_Q \frac{|\nabla u_s(x) - \nabla u_s(y)|^2}{|x - y|^{n + 2 (2 s - 1)}} \, dx dy = \int_Q \int_Q \frac{|\mu_s'(x_n) - \mu_s'(y_n)|^2}{|x - y|^{n + 2 (2 s - 1)}} \, dx dy \\
& = \int_0^1 \int_0^1 |\mu_s'(x_n) - \mu_s'(y_n)|^2 \left( \int_{B_1'} \int_{B_1'} \frac{dx' dy'}{\left( |x_n - y_n|^2 + |x' - y'|^2 \right)^{\frac{n}{2} + 2 s - 1}} \right) dx_n dy_n.
\end{align*}
For~$\delta \in (0, 1/2)$ we consider the set
$$
S(\delta) := \left\{ (x', y') \in B_1' \times B_1' : |x' - y'| < \delta \right\} \subset \R^{n - 1} \times \R^{n - 1},
$$
and we estimate its measure by computing
\begin{align*}
|S(\delta)| & = \int_{B_1'} \left( \int_{B_1' \cap B_\delta'(x')} dy' \right) dx' \ge \int_{B_{1 - \delta}'} \left( \int_{B_\delta'(x')} dy' \right) dx' \\
& = |B_1'|^2 (1 - \delta)^{n - 1} \delta^{n - 1} \ge 2^{1 - n} |B_1'|^2 \delta^{n - 1}.
\end{align*}
Noticing that on~$S(|x_n - y_n| / 4)$ it holds
$$
|x_n - y_n|^2 + |x' - y'|^2 \le \frac{17}{16} \, |x_n - y_n|^2,
$$
and that~$|x_n - y_n| / 4 \le 1 / 2$, we finally obtain
\begin{align*}
\| u_s \|_{H^{2 s}(Q)}^2 & \ge \left( \frac{16}{17} \right)^{\frac{n + 2 s}{2}} \int_0^1 \int_0^1 \frac{|\mu_s'(x_n) - \mu_s'(y_n)|^2}{|x_n - y_n|^{n + 2 (2 s - 1)}} \left| S \left( \frac{|x_n - y_n|}{4} \right) \right| dx_n dy_n \\
& \ge \left( \frac{16}{17} \right)^{\frac{n + 2 s}{2}} 8^{1 - n} |B_1'|^2 \, E.
\end{align*}
Thus,~\eqref{optth} is valid.

To complete the proof of the proposition, we are only left to show that~\eqref{optclaim} is true. To do this, we first note that, for~$t > 0,$
\begin{align*}
\mu_s'(t) & = s t^{s - 1}, \\
\mu_s''(t) & = s(s - 1) t^{s - 2} < 0.
\end{align*}
Accordingly,~$\mu_s'$ is decreasing and for~$0 < r < t < 1$ we have
\begin{align*}
|\mu_s'(t) - \mu_s'(r)| & = \mu_s'(r) - \mu_s'(t) = - \int_r^t \mu_s''(\tau) \, d\tau \\
& = s (1 - s) \int_r^t \tau^{s - 2} \, d\tau \ge s (1 - s) t^{s - 2} (t - r),
\end{align*}
so that
\begin{align*}
E & \ge s^2 (1 - s)^2 \int_0^1 t^{2 (s - 2)} \left( \int_0^t (t - r)^{3 - 4 s} dr \right) dt = \frac{s^2 (1 - s)}{4} \int_0^1 t^{- 2 s} dt.
\end{align*}
Claim~\eqref{optclaim} then follows, since the integral on the right hand side of the above inequality does not converge.
\end{proof}

We remark that, for~$s \in (0, 1 / 2)$, an almost identical argumentation leads to the conclusion that~$u_s \notin H^{s + 1 / 2}_\loc(\overline{\R^n_+})$.

\end{document}